\newcommand\x{1.4}
\newcommand\y{1.5}
\newcommand\mX{\mathbf{X}}
\newcommand\mY{\mathbf{Y}}
\newcommand\mZ{\mathbf{Z}}
\newcommand\ma{\mathbf{A}}
\newcommand\mA{\mathbf{A}}
\newcommand\mB{\mathbf{B}}
\newcommand\mR{\mathbf{R}}
\newcommand\mT{\mathbf{T}}
\newcommand\myeq{\stackrel{\mathrm{def}}{=}}
\newcommand\mG{\mathbf{G}}
\newcommand\mgrad{\mathbf{J}}
\newcommand\mE{\mathbf{E}}
\newcommand\mF{\mathbf{F}}
\newcommand\mU{\mathbf{U}}
\newcommand\mS{\mathbf{S}}
\newcommand\mV{\mathbf{V}}
\newcommand\mI{\mathbf{I}}
\newcommand\mdu{\dot{\mathbf{U}}}
\newcommand\mdv{\dot{\mathbf{V}}}
\newtheorem{statement}{Proposition}[section]
\newcommand\compl{\mathcal{O}}
\newcommand\rank{r}
\newcommand{\subalign}[1]{%
  \vcenter{%
    \Let@ \restore@math@cr \default@tag
    \baselineskip\fontdimen10 \scriptfont\tw@
    \advance\baselineskip\fontdimen12 \scriptfont\tw@
    \lineskip\thr@@\fontdimen8 \scriptfont\thr@@
    \lineskiplimit\lineskip
    \ialign{\hfil$\m@th\scriptstyle##$&$\m@th\scriptstyle{}##$\crcr
      #1\crcr
    }%
  }
}
\renewcommand{\vec}[1]{\mathbf{#1}}
\newcommand{\mat}[1]{\mathbf{#1}}
\newcommand{\tens}[1]{\mathcal{\textbf{#1}}} \newcommand{\tensel}[1]{\mathcal{#1}}
\newcommand{\reshape}{\texttt{reshape}}
\algrenewcommand\textproc{}%
\title{Automatic differentiation for Riemannian optimization on low-rank matrix and tensor-train manifolds}
\begin{document}

\author{Alexander Novikov\footnotemark[1] \footnotemark[4] \and Maxim Rakhuba\footnotemark[4] \and Ivan Oseledets\footnotemark[3] \footnotemark[1]}

\renewcommand{\thefootnote}{\fnsymbol{footnote}}

\footnotetext[1]{Marchuk Institute of Numerical Mathematics of the Russian Academy of Sciences, 119333 Moscow, Russia \email{sasha.v.novikov@gmail.com}.}
\footnotetext[3]{Skolkovo Institute of Science and Technology, Skolkovo Innovation Center, 121205 Moscow, Russia \email{i.oseldets@skoltech.ru}.}
\footnotetext[4]{HSE University, Pokrovsky Boulevard 11, Moscow, 109028 Russian Federation}

\renewcommand{\thefootnote}{\arabic{footnote}}

\date{}

\maketitle

\begin{abstract}
  In scientific computing and machine learning applications, matrices and more general multidimensional arrays (tensors) can often be approximated with the help of low-rank decompositions. 
  Since matrices and tensors of fixed rank form smooth Riemannian manifolds, one of the popular tools for finding low-rank approximations is to use Riemannian optimization.
  Nevertheless, efficient implementation  of Riemannian gradients and Hessians, required in Riemannian optimization algorithms, can be a nontrivial task in practice.
  Moreover, in some cases, analytic formulas are not even available. %
  In this paper, we build upon automatic differentiation and propose a method that, given an implementation of the function to be minimized, efficiently computes Riemannian gradients and matrix-by-vector products between an approximate Riemannian Hessian and a given vector.
\end{abstract}

\section{Introduction}

Automatic differentiation (AD) is a powerful tool for numerically calculating derivatives of functions specified as computer programs.
It significantly simplifies the programming of derivatives of complicated functions without loss of efficiency, providing better stability properties compared with classical numerical differentiation using finite differences.
AD is commonly used in applied mathematics, and in particular, it is at the core of deep learning success, allowing researchers to combine ever more complex neural networks from known modules, and train them without worrying about efficient gradient computation.

In this paper, we are concerned with applying AD to the minimization problem
\[
	\min_{\mX\in \mathcal{M}} f(\mX),
\] 
where $f\colon \mathbb{R}^{n_1\times \dots \times n_d} \to \mathbb{R}$ is a smooth function and $\mathcal{M}$ is a subset of $\mathbb{R}^{n_1\times \dots \times n_d}$ of fixed-rank matrices ($d=2$) or fixed-rank tensor-trains ($d>2$)~\cite{holtz-manifolds-fixed-rank-2011}. 
It is known that in both cases, $\mathcal{M}$ forms a Riemannian manifold. 
One can, therefore, apply Riemannian optimization algorithms~\cite{absil} that
are currently actively used for the development of state-of-the-art algorithms in numerical mathematics, partial differential equations and machine learning.
A realization of such algorithms requires specific knowledge of computational aspects of low-rank objects and is especially complicated for tensor decompositions, where a number of tricks have to be done to reduce rank dependence and ensure the stability of an algorithm. The AD technique proposed in this work allows for a significant simplification of this process.

We are concerned with computing Riemannian gradients and matrix-vector products with approximate Riemannian Hessians, which are the building blocks for Riemannian optimization algorithms.
Note that in the case of low-rank matrix or tensor-train manifolds, the matrix-vector product with the Hessian can be numerically unstable. It happens due to the presence of terms with inverted singular values~\cite{absil2013extrinsic}. We, therefore, consider multiplication by the \emph{approximate} Hessian with an omitted curvature term~\cite{kressner2016preconditioned,vandereycken2010riemannian,ro-jd-2018} (see details in Sec.~\ref{sec:riemannian-opt-briefer}).

In the proposed method, calculating the Riemannian gradient or matrix-vector product with the approximate Riemannian Hessian of a function has the same asymptotic complexity as evaluation of the function itself at a single point\footnote{This holds under the assumption that the function evaluation is at least as expensive as the cost of the orthogonalization operation, which is a necessary step in any Riemannian gradient computation. This assumption holds true for most practical functions (see Propositions~\ref{thm:riemannian-grad-complexity} and~\ref{thm:riemannian-hess-complexity} for more details).}.
Moreover, thanks to the implementation in TensorFlow (a Python library with AD support), the algorithms can be run both on CPUs and GPUs.

We numerically evaluate the performance of the proposed algorithms on several functions arising from solving systems of linear equations, the eigenvalue problem, the tensor completion problem and in the training of a machine learning model.

Our main contributions are:
\begin{itemize}
    \item We develop automatic differentiation algorithms for computing the Riemannian gradient and a matrix-vector product with the approximate Riemannian Hessian of a function for low-rank matrices and TT-tensors. Under mild assumptions, the asymptotic complexity of the proposed method equals the complexity of evaluating the function at one point.
    \item We implement the proposed algorithms in TensorFlow and make them available in $\texttt{T3F}$\footnote{\url{https://github.com/Bihaqo/t3f}} -- an open-source Python library for working with TT decomposition.
\end{itemize}

\paragraph{Related work}
There is a large body of work on creating libraries for working with tensors and tensor decompositions, which often include automatic differentiation abilities (see, e.g., \cite{tensorflow2015-whitepaper,kossaifi2016tensorly, tt-toolbox,ma2020autohoot,suess2017mpnum}, \texttt{tntorch}\footnote{\url{https://tntorch.readthedocs.io}}), but most of these libraries do not target the \emph{Riemannian automatic differentiation}, which is the focus of this paper. 
Typically, researchers compute the Riemannian gradients manually, but the Riemannian automatic differentiation libraries~\citep{townsend2016pymanopt,sommer2016automatic,koppel2018manifold} are gaining traction, empowering the Riemannian optimization community. However, existing Riemannian AD libraries lack low-rank tensor support. For low-rank matrices, \texttt{PyManOpt}~\citep{townsend2016pymanopt} supports Riemannian gradients, but no library supports multiplying the Riemannian Hessian by a given vector, which is required for second-order methods. 

Note that in \cite{townsend2016pymanopt}, an algorithm to compute the Riemannian gradient for low-rank matrices has already been proposed and implemented. 
Nevertheless, in this work, we present an alternative way of doing it avoiding inversions of singular values, which can be close to machine epsilon if the rank is overestimated.

A method for automatic second-order Riemannian differentiation for the manifold of low-rank tensors was proposed in~\cite{psenka2020second}. The authors focus on the curvature term of the Riemannian Hessian (which we omit as explained in Sec.~\ref{sec:riemannian-opt-briefer}) and assume that the other terms can be computed efficiently by a two-step procedure: first computing the Euclidean gradient or Hessian-by-vector product and then projecting it onto the tangent space. This is indeed efficient for some functions, but can be significantly slower than the approach proposed in this paper for some other function. Thus, the two papers complement each other: one can use~\cite{psenka2020second} for computing the curvature term, and the algorithms proposed in this paper for the other terms.

\section{Automatic differentiation (AD)}
In this section, we give a brief introduction to the automatic differentiation concept.
A reader familiar with this topic can skip this section.

AD is a technique for computing the value of the gradient of a smooth function $f\colon\mathbb{R}^{N} \to \mathbb{R}$ specified by a computer program.
In particular, it is assumed that $f$ can be represented as a sequence of elementary operations, for example additions, multiplications, trigonometric functions, logarithms, etc.
Evaluation of $f$ can also involve other operations such as matrix decompositions, for which differentiation formulas are available.
Under this assumption, AD allows for computing derivatives with working precision, and with the number of operations, which is only a small constant factor times larger than the number of operations to execute the evaluation of $f$ (i.e., with the same asymptotic complexity).

Let us illustrate the AD concept in a simple example.
Let $f\colon\mathbb{R}^2 \to \mathbb{R}$:
\[
	f(x_1, x_2) = e^{x_1 x_2} + \sin x_2,
\]
then it can be written as a sequence of elementary operations and depicted as the following computational graph:

\begin{minipage}{0.3\textwidth}
\begin{equation*}
\begin{aligned}
	& v_{-1} = x_1 \\
	& v_0 = x_2 \\
	& v_1 = v_{-1}\, v_0 \\
	& v_2 = e^{v_{1}} \\
	& v_3 = \sin v_{0} \\
	& v_4 = v_2 + v_3 \\
	& f(x_1, x_2) = v_4
\end{aligned}
\end{equation*}
\end{minipage} 
\begin{minipage}{0.6\textwidth}
{\footnotesize
\begin{tikzpicture}[circ/.style={circle,draw,black,minimum size=0.45cm}]

\node [label=center:$x_2\text{\quad}$] (x2) at (0,0) {};
\node [label=center:$x_1\text{\quad}$] (x1) at (0,\y) {};

\node [circ, label=center:$v_0$] (v0) at (\x,0) {};
\node [circ, label=above:${=}$] at (\x,0) {};
\node [circ, label=center:$v_{\text{-}1}$] (vm1) at (\x,\y) {};
\node [circ, label=above:${=}$] at (\x,\y) {};

\node [circ, label=center:$v_{1}$] (v1) at (\x+\x,\y) {};
\node [circ, label=above:${\times}$] at (\x+\x,\y) {};

\node [circ, label=center:$v_{2}$] (v2) at (\x+\x+\x,\y) {};
\node [circ, label=above:${\mathrm{exp}(\cdot)}$] at (\x+\x+\x,\y) {};
\node [circ, label=center:$v_{3}$] (v3) at (\x+\x+\x,0) {};
\node [circ, label=above:${\mathrm{sin}(\cdot)}$] at (\x+\x+\x,0) {};

\node [circ, label=center:$v_{4}$] (v4) at (\x+\x+\x+\x,\y/2) {};
\node [circ, label=above:${+}$] at (\x+\x+\x+\x,\y/2) {};

\node [label=center:\qquad\qquad${f(x_1,x_2)}$.] (f) at (\x+\x+\x+\x+\x,\y/2) {};

\draw[-{Latex[scale=1.0]}] (x1) -- (vm1);
\draw[-{Latex[scale=1.0]}] (x2) -- (v0);
\draw[-{Latex[scale=1.0]}] (vm1) -- (v1);
\draw[-{Latex[scale=1.0]}] (v1) -- (v2);
\draw[-{Latex[scale=1.0]}] (v2) -- (v4);
\draw[-{Latex[scale=1.0]}] (v3) -- (v4);
\draw[-{Latex[scale=1.0]}] (v0) -- (v3);
\draw[-{Latex[scale=1.0]}] (v0) -- (v1);
\draw[-{Latex[scale=1.0]}] (v4) -- (f);

\end{tikzpicture}
}
\end{minipage}
\\

\noindent AD uses the chain rule\footnote{In this paper we focus on \emph{reverse-mode} autodiff, which is also sometimes called backpropagation. The alternative -- \emph{forward-mode} autodiff -- is typically used for functions $f:\mathbb{R}^{M} \to \mathbb{R}^{N}$ where $M < N$ because of the smaller asymptotic complexity in this case.} to find both components of $\nabla f$ in one pass through the computational graph in reverse order. Let $\overline{v}_i \triangleq \frac{\partial f}{\partial v_i}$ for $i=-1,\ldots,4$.
We have,
\begin{equation*}
\begin{aligned}
	&\overline{v}_4 = \frac{\partial f}{\partial v_4} = 1 \\
	&\overline{v}_3 = \frac{\partial f}{\partial v_4}\ \frac{\partial v_4}{\partial v_3} \equiv \overline{v}_4 \\
	&\overline{v}_2 = \frac{\partial f}{\partial v_4}\ \frac{\partial v_4}{\partial v_2} \equiv \overline{v}_4 \\
	&\overline{v}_1 = \frac{\partial f}{\partial v_2}\ \frac{\partial v_2}{\partial v_1} \equiv \overline{v}_2 e^{v_1}  \\
	&\overline{v}_0 = \frac{\partial f}{\partial v_3}\ \frac{\partial v_3}{\partial v_0} + \frac{\partial f}{\partial v_1}\ \frac{\partial v_1}{\partial v_0} \equiv \overline{v}_3 \cos v_0 + \overline{v}_1 v_{-1}\\
	&\overline{v}_{-1} = \frac{\partial f}{\partial v_1}\ \frac{\partial v_1}{\partial v_{-1}} \equiv \overline{v}_1 v_0 \\
\end{aligned}
\end{equation*}
where $\overline{v}_{-1} = \frac{\partial f}{\partial {v}_{-1}} \equiv \frac{\partial f}{\partial {x}_{1}}$ and $\overline{v}_{0} = \frac{\partial f}{\partial {v}_{0}} \equiv \frac{\partial f}{\partial x_2}$.

Thus, AD allows us to calculate all components of $\nabla f$ in one pass with $\mathcal{O}(F)$ complexity, where $F$ is the number of FLOP to calculate $f$ at a given $(x_1,\dots,x_N)$.
In general, the computational graph for computing the gradient of a function has as many nodes as the original graph for evaluating the function value, and each node is, at most, a small constant times more expensive than the corresponding node from the original graph.

Let us compare AD with numerical differentiation using finite differences, where components of a gradient of a function $f:\mathbb{R}^{N} \to \mathbb{R}$ are approximated, e.g., using forward differences  
\begin{equation}\label{eq:fd}
    \frac{\partial f}{\partial x_i}(x_1,\dots,x_N) \approx \frac{f(x_1,\dots,x_{i-1},x_i + h,x_{i+1}, \dots, x_N) - f(x_1,\dots,x_N)}{h},
\end{equation}
where $h$ is chosen so that the approximation error is small enough.
First, numerical differentiation is computationally more expensive than AD.
Indeed, \eqref{eq:fd} requires $N+1$ function evaluations to approximate $\nabla f$ and, hence, the complexity is $\mathcal{O}(N F)$.
Moreover, due to the error amplification of derivative approximation, \eqref{eq:fd} cannot achieve accuracy better than the square root of machine precision~\citep{gander2014scientific}.
At the same time, AD is more robust and can achieve machine precision accuracy~\citep{margossian2019review}.

Another alternative to AD and numerical differentiation is symbolic differentiation.
In it, one assembles the final formula for each component of the gradient using a sequence of rules as product rule, chain rule, etc. Since this constraint of expressing the entire result as a single formula does not allow introducing intermediate variables, in the worst case the final formula may contain exponentially many duplicated fragments.
By contrast to the symbolic differentiation, in AD one uses intermediate variables to define those duplicated fragments, allowing one to never evaluate any quantity more than once and providing efficiency guarantees.

For a more in-depth review of automatic differentiation see e.g.~\cite{griewank2008evaluating}.

\section{Riemannian optimization}\label{sec:riemannian-opt-briefer}

Let us briefly introduce the Riemannian optimization concept.
Let $\mathcal{M} \subset \mathbb{R}^{n_1\times \dots \times n_d}$ be a smooth embedded submanifold.
In this paper, we are concerned with the manifold of fixed-rank matrices ($d=2$) and the manifold of tensors of fixed tensor-train rank ($d>2$).
The definitions will be given in Section~\ref{sec:low-rank-matrix-manifold} and in Section~\ref{sec:TT_manifold} respectively.
In this section, we only provide an introductory overview without implementation details.

Our goal is to solve a minimization problem with a smooth function $f\colon\mathbb{R}^{n_1\times \dots \times n_d} \to \mathbb{R}$:
\[
\min_{\mX\in \mathbb{R}^{n_1\times \dots \times n_d}}
f(\mX).
\]
Assume that the solution to this problem can be approximated by a certain point $\mX_* \in \mathcal{M}$.
Then, we can reformulate the problem as
\begin{equation} \label{eq:optimization_main}
\min_{\mX\in \mathcal{M}}
f(\mX),
\end{equation}
i.e., the search space $\mathbb{R}^{n_1\times \dots \times n_d}$ is restricted to a Riemannian manifold~$\mathcal{M}$.
Riemannian optimization algorithms usually involve computation of Riemannian gradients $\mathrm{grad}\, f(\mX)$, which for embedded submanifolds of $\mathbb{R}^{n_1\times \dots \times n_d}$ and functions $f$ defined on the ambient space, may be written as a projection of the Euclidean gradient $\nabla f(\mX)$ to the tangent plane~$T_\mX \mathcal{M}$ of~$\mathcal{M}$ at the point $\mX$: 
\begin{equation} \label{eq:grad}
	\mathrm{grad}\, f(\mX) = \mathrm{P}_{T_\mX \mathcal{M}} \nabla f(\mX),
\end{equation}
where $\mathrm{P}_{T_\mX \mathcal{M}}\colon\mathbb{R}^{n_1\times \dots \times n_d} \to T_\mX \mathcal{M}$ denotes an operator of orthogonal projection to the tangent plane~$T_\mX \mathcal{M}$ and depends on $\mX$ non-linearly.
Given the Riemannian gradient notion, we may solve~\eqref{eq:optimization_main} using the Riemannian gradient descent
\[
\mX_{k+1} = R_{\mX_k}(\tau_k\, \mathrm{grad}\, f(\mX_k))),
\]
where $R_{\mX_k} \colon T_\mX \mathcal{M} \to \mathbb{R}^{n_1\times \dots \times n_d}$ returns a tangent vector back to the manifold (see~\cite{ao-retract-2014} for different retraction operations) and the parameter $\tau_k$ is chosen to ensure decay of the functional.
More advanced optimization algorithms, e.g., a Riemannian version of the conjugate gradient method is also available~\cite{absil}.

One can also utilize second-order methods, which involve computation of the Riemannian Hessian operator.
For the Riemannian Hessian $\mathrm{Hess} \, f(\mX)\colon T_\mX \mathcal{M} \to T_\mX \mathcal{M}$ we can use\footnote{Note that both $\mathrm{grad}$ and $\mathrm{Hess}$ operations depend on the particular choice of a manifold. Nevertheless, we do not use the subscript $\mathcal{M}$ as it will be clear from context and to not overcomplicate the notation.} the formula~\citep{absil2013extrinsic,kressner2016preconditioned}:
\begin{equation} \label{eq:hess}
	\mathrm{Hess} \, f(\mX)  = \mathrm{P}_{T_\mX \mathcal{M}}\nabla^2 f(\mX) + \mathrm{P}_{T_\mX \mathcal{M}} \dot{\mathrm{P}}_{T_\mX \mathcal{M}} (\nabla f(\mX)),
\end{equation}
where $\nabla^2 f(\mX)$ is the Euclidean Hessian and $\dot{\mathrm{P}}_{T_\mX \mathcal{M}}$ denotes the Fr\'echet derivative
of~$\mathrm{P}_{T_\mX \mathcal{M}}$.
The second term in~\eqref{eq:hess} arises due to the nonlinearity of the manifold.
For the manifold of low-rank matrices, it contains the inverse of a matrix of singular values~\citep{absil2013extrinsic}.
If singular values are small, this can lead to numerical instabilities.
To avoid this problem, the second term in~\eqref{eq:hess} can be omitted~\citep{kressner2016preconditioned,rakhuba2018jacobi}.
In this case, the optimization procedure can be interpreted as a constrained Gauss-Newton method.
We, therefore, consider only linearized Hessians and are interested in an efficient matrix-vector product by the first term of~\eqref{eq:hess}:
\begin{equation} \label{eq:ghess}
	\mathrm{H}_\mX [\mZ] \equiv \mathrm{P}_{T_\mX \mathcal{M}}\nabla^2 f(\mX) \, \mZ, \qquad \mX \in \mathcal{M}, \quad \mZ \in T_\mX \mathcal{M}.
\end{equation}

Note that first computing $\nabla f(\mX)$ as in~\eqref{eq:grad} and then applying $\mathrm{P}_{T_\mX \mathcal{M}}$ can be inefficient. %
Therefore, $\mathrm{P}_{T_\mX \mathcal{M}} \nabla f(\mX)$ should be calculated at once. For example, for the manifold of low-rank matrices, the Riemannian gradient $\mathrm{P}_{T_\mX \mathcal{M}} \nabla f(\mX)$ can always be represented as a low-rank matrix (see Sec.~\ref{sec:low-rank-matrix-manifold} for details), while the Euclidean gradient $\nabla f(\mX)$ can have an arbitrary large rank. Thus, using the Euclidean gradient in the intermediate calculations can lead to an inefficient algorithm. Similarly, first computing $\nabla^2 f(\mX) \, \mZ$ as in~\eqref{eq:ghess} and then applying  $\mathrm{P}_{T_\mX \mathcal{M}}$ can be significantly less efficient than calculating $\mathrm{P}_{T_\mX \mathcal{M}}\nabla^2 f(\mX) \, \mZ$ at once.
The goal of this paper is, thus, to develop an efficient tool to calculate~\eqref{eq:grad} and~\eqref{eq:ghess} -- the  building block operations of Riemannian optimization. 
The key assumption we make is that we can efficiently evaluate $f$ at any point~$\mX + T_\mX \mathcal{M}$, $\mX\in\mathcal{M}$.
Then, under mild conditions (see Propositions~\ref{thm:riemannian-grad-complexity} and~\ref{thm:riemannian-hess-complexity} and below), the overall complexity of the presented algorithm is only constant times larger than the complexity of the function evaluation.

Let us introduce the scalar product and the associated norm
\[
	\left<\mX, \mY\right> = \sum_{i_1,\dots,i_d = 1}^{n_1,\dots,n_d} \mX_{i_1,\dots,i_d} \mY_{i_1,\dots,i_d}, \qquad \|\mX \| = \left<\mX, \mX \right>^{1/2}.
\]
Using this notation, possible choices of $f(\mX)$ are, for example:
\begin{itemize}
	\item $f(\mX) = \|\mathrm{A}\mX - \mathbf{F}\|^2$ or $f(\mX) = \left<\mathrm{A}\mX, \mX\right> - 2 \left<\mathbf{F}, \mX\right>$ for given $\mathrm{A}\colon\mathbb{R}^{n_1\times \dots \times n_d}\to \mathbb{R}^{n_1\times \dots \times n_d}$ and  $\mathbf{F}\in \mathbb{R}^{n_1\times \dots \times n_d}$ that arise when solving linear systems; %
	\item $f(\mX) = {\left<\mathrm{A} [\mX], \mX\right>}/{\left<\mX, \mX\right>}$ with possibly nonlinear $\mathrm{A}\colon\mathbb{R}^{n_1\times \dots \times n_d}\to \mathbb{R}^{n_1\times \dots \times n_d}$, which arises when solving (nonlinear) eigenvalue problems;
	\item $f(\mX) = \|\mathrm{P}_\Omega (\mX - \ma)\|^2$ where $\mathrm{P}_\Omega$ denotes projection on the index set $\Omega$ such that %
	\[
		\mathrm{P}_\Omega \mX = 
		\begin{cases}
			X_{i_1 \dots i_d} \quad &(i_1,\dots,i_d) \in \Omega,\\
			0 & \text{otherwise}.
		\end{cases}
	\]
	This type of problem is referred to as matrix or tensor completion problems.
	\item $f(\mX)$ is a neural network loss function, which arises when using TT-decomposition to parametrize a recurrent neural network and applying Riemannian optimization for training  (for more details see~Section~\ref{sec:exp-functionals}).
\end{itemize}
In Section~\ref{sec:stopgrad}, we will also discuss how our approach can be used for operations that are not directly related to a minimization of a function, e.g., how to efficiently compute the preconditioned residual $\mathrm{P}_{T_\mX \mathcal{M}} \mathrm{B}^{-1} (\mathrm{A} \mX - \mathbf{F})$ for non-commuting $\mathrm{A}$ and $\mathrm{B}$.

\section{Automatic differentiation for the Riemannian gradient: fixed-rank matrices}
\label{sec:matr}
In this section, we propose an approach to automatically compute Riemannian gradients for the manifold of fixed-rank matrices. %

\subsection{The manifold of fixed-rank matrices} \label{sec:low-rank-matrix-manifold}

Let us briefly recall the concepts related to the manifold of fixed-rank matrices.
The set of matrices of fixed rank $r$: $\mathcal{M}_r = \{\mX\in\mathbb{R}^{m\times n}: \mathrm{rank}(\mX) = r\}$ forms a smooth submanifold of $\mathbb{R}^{m \times n}$~\cite[Example 8.14]{lee2003introduction}. 
Using SVD, any point $\mX \in \mathcal{M}_r$ of the manifold can be represented as $\mX = \mU \mS \mV^\intercal$, where $\mU \in \mathbb{R}^{m \times r}$ and $\mV \in \mathbb{R}^{n \times r}$ are matrices with orthonormal columns –– singular vectors ($\mU^\intercal \mU = \mI_r$, $\mV^\intercal \mV = \mI_r$) and $\mS \in \mathbb{R}^{r \times r}$ is the diagonal matrix of singular values. 
The tangent space $T_\mX \mathcal{M}_r$ of the manifold $\mathcal{M}_r$ at a point $\mX = \mU \mS \mV^\intercal \in \mathcal{M}_r$ can be written as
\begin{equation}
\label{eq:matrix_tangent_space}
T_\mX \mathcal{M}_r = \{\mdu \mV^\intercal + \mU \mdv^\intercal \, \mid \, \mdu \in \mathbb{R}^{m \times r}, \mdv \in \mathbb{R}^{n \times r}: \mV^\intercal \mdv = \mathbf{O}_{r\times r}\},
\end{equation}
where $\mathbf{O}_{r\times r}$ denotes a zero matrix of size $r\times r$. In what follows, we refer to the matrices $\mdu$ and $\mdu$ that define an element of the tangent space as \emph{delta-matrices}.
The orthogonal projection of $\mZ\in\mathbb{R}^{m\times n}$ to the tangent space $T_\mX \mathcal{M}_r$ can, thus, be obtained as follows:
\begin{equation}
\label{eq:projection_low_rank}
\mathrm{P}_{T_\mX \mathcal{M}_r} \mZ = \mZ \mV \mV^\intercal + \mU \mU^\intercal \mZ (\mI - \mV \mV^\intercal).
\end{equation}
We refer the reader to, e.g.,~\cite[Sec. 2.1]{vandereycken2013low} for a more detailed discussion of the manifold of low-rank matrices, including the derivation of  \eqref{eq:projection_low_rank}.

Finally, to simplify the notation, we denote the projection operator as 
\[
	\mathrm{P}_\mX \triangleq \mathrm{P}_{T_\mX \mathcal{M}_r}.
\]
We also introduce $\mathcal{T}_{\mX}$ that maps parametrization matrices to an element of the tangent plane at the point $\mX$
\begin{equation}
\label{eq:matrix-tangent-mapping}
  \mathcal{T}_{\mX}: \mathbb{R}^{m \times r} \times \mathbb{R}^{n \times r} \to T_\mX \mathcal{M}_r,
\end{equation}
namely,
\begin{equation}\label{eq:tangent-matrix-parametrization}
  \mathbf{T} = \mathcal{T}_{\mX} (\mdu, \mdv) = \mdu \mV^\intercal + \mU \mdv^\intercal.
\end{equation}
This mapping will be used later in Sec.~\ref{sec:low-rank-matrix-autodiff} to simplify the presentation of the algorithm.

\subsection{Automatic differentiation approach}\label{sec:low-rank-matrix-autodiff}

In this section, we propose an efficient way of computing the Riemannian gradient 
\[
	\mathrm{grad}\, f(\mX) = \mathrm{P}_\mX\, \nabla f \in T_{\mX} \mathcal{M}_r \subset \mathbb{R}^{m \times n}.
\]
The Riemannian gradient $\mathrm{grad}\, f(\mX)$ is an $m \times n$ matrix, but as noted in the previous section it can be defined via the delta matrices $\mdu$ and $\mdv$ using just $(m + n)r$ parameters ($(m + n)r - r^2$ if gauge condition $\mV^\intercal \mdv = \mathbf{O}_{r\times r}$ are taken into account). Thus, if we can avoid using full $m \times n$ matrices in intermediate calculations, we can potentially compute the Riemannian gradient with a better asymptotic complexity than $\compl(mn)$.

A naive approach of computing the Riemannian gradient is to first compute ${\partial f}/{\partial \mX}$ with AD and then project the result to the tangent plane by using formula~\eqref{eq:projection_low_rank}:
\begin{equation}\label{eq:proj_grad}
	\mathrm{P}_{\mX} \nabla f = \nabla f \mV \mV^\intercal + \mU \mU^\intercal \nabla f\, (\mI - \mV \mV^\intercal).
\end{equation}
The problem with this approach is that it requires finding the full matrix of the Euclidean gradient ${\partial f}/{\partial \mX}$ of the size $m\times n$, which we want to avoid.
Alternatively, we may find the Riemannian gradient without explicitly forming ${\partial f}/{\partial \mX}$.
In particular, we notice that the Riemannian gradient~\eqref{eq:proj_grad} involves computing the following multiplication of matrices
\begin{equation}
    \label{eq:matrix-autodiff-quantities-of-interest}
	\left(\nabla f \mV\right) \in \mathbb{R}^{m\times r}, \quad \left(\mU^\intercal \nabla f\right) \in \mathbb{R}^{r\times n}.
\end{equation}
We may find these two quantities by using the classical AD as follows:
\[
\nabla f \mV = \nabla_{\mE} f(\mE \mV^\intercal)|_{\mE=\mU \mS}, \quad \mU^\intercal \nabla f = \nabla_{\mF} f(\mU \mF)|_{\mF=\mS \mV^\intercal}.
\]
So, one can use classic AD on the function $f$ \emph{twice} (each time with the complexity equal to evaluating the function $f$ at a single point due to AD properties) to compute all the pieces that depend on $f$.

However, in the rest of this section we propose an alternative way of computing quantities~\eqref{eq:matrix-autodiff-quantities-of-interest} by using classic AD a \emph{single time} on a specially introduced auxiliary function. This alternative approach is introduced because it naturally generalizes into an efficient algorithm for the tensor case (see Sec.~\ref{sec:riemannian-grad-tensors}).

Quantities~\eqref{eq:matrix-autodiff-quantities-of-interest} can be computed at once by differentiating (using AD) the following auxiliary function defined using mapping~\eqref{eq:tangent-matrix-parametrization}
\[
  g \myeq f\circ \mathcal{T}_\mX.
\]
We have 
\begin{equation}
\label{eq:matrix_aux_def}
	g(\mA, \mB) = f(\mathcal{T}_\mX(\mA, \mB)) = f(\mA \mV^\intercal + \mU \mB^\intercal).
\end{equation}
Indeed, $\mX = \mU \mS \mV^\intercal$ can be represented as
\[
	 \mathcal{T}_\mX(\mU\mS, \mathbf{O}_{n\times r}) = 
	(\mU\mS)\cdot\mV^\intercal + \mU\cdot\mathbf{O}_{n\times r}^\intercal = \mX,
\]
and, hence, the partial derivatives of $\mathbf{T} = \mathcal{T}_{\mX} (\mdu, \mdv)$ at $(\mA, \mB) = (\mU\mS,\mathbf{O}_{n\times r})$  are
\[
  \frac{\partial T_{ij}}{\partial A_{pq}}
  = \frac{\partial (\mA \mV^\intercal + \mU \mB^\intercal)_{ij}}{\partial A_{pq}} = \delta_{ip}V_{jq},
\]
\[
  \frac{\partial T_{ij}}{\partial B_{pq}}
  = \frac{\partial (\mA \mV^\intercal + \mU \mB^\intercal)_{ij}}{\partial B_{pq}} = \delta_{jp}U_{iq}.
\]
where $\delta_{ip}$ is the Kronecker delta.
Applying the chain rule to~\eqref{eq:matrix_aux_def}, we get
\begin{equation*}
\begin{aligned}
	& \left.\frac{\partial g}{\partial A_{pq}}\right|_{\subalign{&\mA = \mU\mS, \\ &\mB = \mathbf{O}_{n\times r}}} = \sum_{i,j} \left.\frac{\partial f}{\partial T_{ij}}\right|_{\mT = \mX} \left.\frac{\partial T_{ij}}{\partial A_{pq}}\right|_{\subalign{&\mA = \mU\mS, \\ &\mB = \mathbf{O}_{n\times r}}} = \sum_{i,j} \frac{\partial f}{\partial X_{ij}} \delta_{ip}V_{jq} = \left(\nabla f \mV\right)_{pq}, \\[5pt]
    & \left.\frac{\partial g}{\partial B_{pq}}\right|_{\subalign{&\mA = \mU\mS, \\ &\mB = \mathbf{O}_{n\times r}}} = \sum_{i,j} \left.\frac{\partial f}{\partial T_{ij}}\right|_{\mT = \mX} \left.\frac{\partial T_{ij}}{\partial B_{pq}}\right|_{\subalign{&\mA = \mU\mS, \\ &\mB = \mathbf{O}_{n\times r}}} = \sum_{i,j} \frac{\partial f}{\partial X_{ij}} \delta_{jp}U_{iq} = \left(\mU^\intercal \nabla f\right)_{qp}.
\end{aligned}
\end{equation*}
Thus, a low-rank representation of the Riemannian gradient can be written as 
\[
	\mathrm{P}_{\mX} \nabla f = 
	\begin{bmatrix}
		\mU & \mdu
	\end{bmatrix}
	\begin{bmatrix}
		\mdv & \mV
	\end{bmatrix}^\intercal,
\]
with 
\begin{equation}
\begin{aligned}
\label{eq:matrix_autodiff}
\mdu &= \left.\frac{\partial g}{\partial \mA}\right|_{\subalign{&\mA = \mU\mS, \\ &\mB = \mathbf{O}_{n\times r}}} \\
\mdv^\intercal &= \left.\frac{\partial g}{\partial \mB^\intercal}\right|_{\subalign{&\mA = \mU\mS, \\ &\mB = \mathbf{O}_{n\times r}}}  \left(\mI - \mV \mV^\intercal \right).
\end{aligned}
\end{equation}
The algorithm to compute the Riemannian gradient is summarized in Algorithm~\ref{alg:matrix-riemannian-autodiff}.

\begin{algorithm}[t]
\caption{Computing the Riemannian gradient for low-rank matrices via AD.}\label{alg:matrix-riemannian-autodiff}
\begin{algorithmic}[1]
\Require $\mX = \mU\mS\mV^\intercal \in\mathbb{R}^{m \times n}$, $p(\mathbf{L}, \mathbf{R})$ -- implementation of evaluating $f$ at $\mathbf{L}\mathbf{R}^\intercal$ for any $\mathbf{L}\in\mathbb{R}^{m\times 2r}$ and $\mathbf{R}\in\mathbb{R}^{n\times 2r}$.
\Ensure $\mdu, \mdv$ such that $\mathrm{P}_\mX \nabla f = \mdu \mV^\intercal + \mU \mdv^\intercal$
\Statex
\Function{g}{$\mA, \mB$}
  \State \Return $p([\mU \, \mA],\, [\mB\, \mV])$
\EndFunction
\State Using AD, compute $\mdu := \frac{\partial g}{\partial \mA}|_{\subalign{&\mA = \mU\mS, \\ &\mB = \mathbf{O}_{n\times r}}}$
\State Using AD, compute $\mdv := \frac{\partial g}{\partial \mB}|_{\subalign{&\mA = \mU\mS, \\ &\mB = \mathbf{O}_{n\times r}}}$
\State $\mdv^\intercal := \mdv^\intercal  - (\mdv^\intercal \mV) \mV^\intercal $
\end{algorithmic}
\end{algorithm}

\subsection{Complexity of the approach}\label{sec:riemannian-grad-mat-complexity}
Let us estimate the complexity of computing $\mdu$ and $\mdv$ by the proposed approach, i.e., by defining the auxiliary function $g$ and differentiating it with respect to $\mA$ and $\mB$.

\begin{statement}
\label{thm:matrix-riemannian-grad-complexity}
Let $f: \mathbb{R}^{m \times n} \rightarrow \mathbb{R}$ be a smooth function defined by a program $p$, which takes as input SVD decomposition of a matrix $\mX = \mU\mS\mV^\intercal \in\mathbb{R}^{m\times n}$ and outputs the value $f(\mX)$ in $F = F(m,n,r)$ floating point operations (FLOP), which is polynomial with respect to the rank of the matrix $\tens{X}$ (i.e., the program $p$ belongs to the P complexity class). Then, the complexity of using Alg.~\ref{alg:matrix-riemannian-autodiff} for computing delta terms $\mdu$ and $\mdv$ which define the Riemannian gradient $\mathrm{P}_\mX \nabla f = \mdu \mV^\intercal + \mU \mdv^\intercal$
   is $\compl(F + n \rank^2)$.
\end{statement}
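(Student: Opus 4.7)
The plan is to decompose the running time of Algorithm~\ref{alg:matrix-riemannian-autodiff} into three pieces and bound each separately: (i) the cost of one forward evaluation of the auxiliary function $g(\mA,\mB) = p([\mU\ \mA],\,[\mB\ \mV])$, (ii) the cost of the reverse-mode AD passes producing $\partial g/\partial\mA$ and $\partial g/\partial\mB$ at the point $(\mU\mS,\mathbf{O}_{n\times r})$, and (iii) the cost of the closing orthogonalization $\mdv^\intercal \leftarrow \mdv^\intercal - (\mdv^\intercal\mV)\mV^\intercal$ in the last line.

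For step (i), the arguments supplied to $p$ are an $m\times 2r$ and an $n\times 2r$ factor, so $g$ amounts to evaluating $p$ at a matrix of rank at most $2r$. The polynomial-rank hypothesis is exactly what is needed here: if $F(m,n,r)$ is bounded by a polynomial of degree $k$ in $r$, then term-by-term $F(m,n,2r)\le 2^k F(m,n,r)$, and hence one evaluation of $g$ costs $\mathcal{O}(F)$ with a constant depending only on $k$. For step (ii), the cheap-gradient principle of reverse-mode AD recalled in Section~2 guarantees that the gradient of any scalar program costs at most a constant factor more than one forward evaluation; applied to $g$, a single backward pass yields both partials $\partial g/\partial\mA$ and $\partial g/\partial\mB$ simultaneously in $\mathcal{O}(F)$ FLOPs, and by the derivation in \eqref{eq:matrix_autodiff} these are respectively $\nabla f\,\mV$ and $(\mU^\intercal \nabla f)^\intercal$.

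For step (iii), the operations are elementary linear algebra on small factors: $\mdv^\intercal\mV\in\mathbb{R}^{r\times r}$ costs $\mathcal{O}(nr^2)$, the product $(\mdv^\intercal\mV)\mV^\intercal\in\mathbb{R}^{r\times n}$ costs another $\mathcal{O}(nr^2)$, and the subtraction costs $\mathcal{O}(nr)$. Summing the three contributions yields $\mathcal{O}(F)+\mathcal{O}(nr^2)=\mathcal{O}(F+n\rank^2)$, which is the claim. The only non-routine ingredient is the first bound: without the polynomial-rank hypothesis there would be no way to rule out that $p$ blows up super-polynomially when handed rank-$2r$ inputs instead of rank-$r$ ones, which would break the whole estimate. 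Once that hypothesis is in place, the remainder is a direct invocation of AD complexity together with a FLOP count of two $n\times r$ by $r\times r$ multiplications.
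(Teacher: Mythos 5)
Your proposal is correct and follows essentially the same route as the paper's proof: bound the evaluation of $g$ on the rank-$2r$ factorization by $\compl(F)$ using the polynomial-in-$r$ hypothesis, invoke the cheap-gradient property of reverse-mode AD to get the partials in $\compl(F)$, and count $\compl(n\rank^2)$ for the final orthogonalization step. The only difference is that you spell out the $F(m,n,2r)\le 2^k F(m,n,r)$ argument explicitly, which the paper leaves implicit here (it states the analogous $\compl(2^qF)=\compl(F)$ bound only in the tensor-train version of the proof); this is a welcome clarification rather than a deviation.
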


As an example, computing 
\[
	f(\mX) = \left<\mX, \mX \right>, \quad \mX = \mU \mV^\intercal, \quad \mU\in\mathbb{R}^{m\times r}, \mV\in\mathbb{R}^{n\times r},
\]
leads to $F = \mathcal{O} \left( (n+m)r^2 \right)$ FLOP, since
\[
	\left<\mX, \mX \right> = \mathrm{trace}(\mU\mV^\intercal \mV\mU^\intercal) = \mathrm{trace}\left((\mU^\intercal\mU) (\mV^\intercal \mV) \right).
\]

\begin{proof}[Proof of Prop.~\ref{thm:matrix-riemannian-grad-complexity}]
The auxiliary function $g(\mA, \mB)$ can be constructed by feeding to $p$ the factors of the matrix 
\[
	\mA \mV^\intercal + \mU \mB^\intercal = \begin{bmatrix}\mA&\mU\end{bmatrix} \begin{bmatrix}\mV &\mB\end{bmatrix}^\intercal
\]
which is represented with the rank $2r$ --- twice larger than the rank $r$ of the original matrix. As a result, the asymptotic complexity (as a function of $n$ and $r$) of evaluating the function on such a matrix is still $\compl(F)$.
Thanks to the properties of AD, computing the derivatives of $g$ with respect to the factors~$\mA$ and~$\mB$ has the same complexity $\compl(F)$. 
Finally, computing the factor $\mdv$ using~\eqref{eq:matrix_autodiff} can be done in $\compl(r^2 n)$, yielding the total complexity $\compl(F + n \rank^2)$.
\end{proof}

For most functions used in practice, the asymptotic complexity $F$ of executing the function at one point exceeds $\compl(n \rank^2)$ and the total complexity of the proposed algorithm (as a function of $n$ and~$r$) equals to $\compl(F + n \rank^2) = \compl(F)$.

\subsection{More general view of the proposed algorithm}  \label{sec:intuitive-general}

In this section, we look at the proposed algorithm from a more general perspective, trying to avoid specifics of the fixed-rank manifold. The main idea of the proposed algorithm is to introduce the auxiliary function~\eqref{eq:auxfunc} and express the desired Riemannian gradient $\mathrm{grad}\, f(\vec{X})$ in terms of its derivatives~\eqref{eq:matrix_aux_def}. 
Note that we could have used an alternative auxiliary function\footnote{One can use the same derivation as in Sec.~\ref{sec:low-rank-matrix-autodiff} to prove that differentiating the alternative auxiliary function $\widehat{h}$ yields the same results.} 

\[h(\mat{C}, \mat{D}) = f(\mX + \mat{C} \mV^\intercal + \mU \mat{D}^\intercal) = f((\mU \mS + \mat{C}) \mV^\intercal + \mU \mat{D}^\intercal) = g(\mat{C} + \mU \mS, \mat{D}).\] 
If one combines both arguments of the mapping $\mathcal{T}_\mX(\mA, \mB)$ (see~\eqref{eq:matrix-tangent-mapping}) into a single $(n + m) r$ dimensional vector $\vec{v}$, you can define an equivalent mapping
$
  \widehat{\mathcal{T}}_{\mX}\colon \mathbb{R}^{(n + m) r} \to T_\mX \mathcal{M}_r
$

and an alternative representation of the auxiliary function 
\begin{equation}\label{eq:hfun}
    \widehat{h}(\vec{v}) = f(\mX + \widehat{\mathcal{T}}_{\mX}(\vec{v})).
\end{equation}

Thus, the proposed approach is equivalent to defining a mapping $\widehat{\mathcal{T}}_{\mX}$ from the parametrization of the tangent space onto the tangent space itself, defining an auxiliary function $\widehat{h}(\vec{v})$~\eqref{eq:hfun}, computing its gradient using classical AD, and finally doing certain post processing of this gradient: reshaping, enforcing the gauge conditions as in~\eqref{eq:matrix_autodiff}. %
It is not surprising that we obtain a Riemannian gradient using these formulas, as informally the gradient of the auxiliary function \eqref{eq:hfun} is the fastest ascent direction of $\widehat{h}(\vec{v})$ at $\vec{v} = 0$ and, hence, of $f$ at $\mX$ in the direction of all possible vectors from $T_\mX \mathcal{M}_r$.

\section{Automatic differentiation for the Riemannian gradient: fixed-rank tensors}
\label{sec:tens}
In this section, we extend the results of Sec.~\ref{sec:low-rank-matrix-autodiff} to fixed-rank tensor-train tensors, which is a generalization of fixed-rank matrices to multidimensional arrays.

\subsection{The manifold of TT-tensors of fixed rank} \label{sec:TT_manifold}
A tensor $\tens{A} \in \mathbb{R}^{n_1 \times \ldots \times n_d}$ is said to be represented in the \emph{tensor-train} format~\citep{oseledets2011ttMain} (\emph{TT-format}) if each of its elements $A_{i_1 \ldots i_d}$ is a product of $d$ matrices:
\begin{equation}
\label{eq:TT}
A_{i_1 \ldots i_d} = \mG_1[i_1] \ldots \mG_d [i_d],
\end{equation} 
where for fixed $i_k = 1, \ldots, n_k$, $\mG_k[i_k]$ is an $r_{k-1} \times r_{k}$ matrix for any value of $k = 1, \ldots, d$. 
We require $r_0 = r_d = 1$ such that $\mG_1[i_1]$ is $1\times r_1$ row vector and $\mG_d [i_d]$ is $r_{d-1} \times 1$ column vector. 
The three-dimensional arrays $\mG_k$ of sizes $r_{k-1} \times n_k \times r_k$, $k=1,\dots,d$ are called \emph{TT-cores} and the vector
\[
\mathbf{r}_{\mathrm{TT}}(\tens{A}) = (r_1,\dots,r_{d-1}),
\]
is called the \emph{TT-rank} of $\mA$. 
For a more detailed discussion on the properties of the TT-format see~\cite{oseledets2011ttMain}.

Like in the matrix case (Sec.~\ref{sec:matr}), the set of tensors  \[\mathcal{M}_\mathbf{r} = \{\tens{A} \in \mathbb{R}^{n_1 \times \ldots \times n_d} \,\mid\, \mathbf{r}_{\mathrm{TT}}(\tens{A}) = \mathbf{r}\}.\]
 forms a smooth manifold.
To parametrize its tangent spaces, we need the notion of orthogonalization of the TT-cores. 
A TT-representation~\eqref{eq:TT} is called $\mu$-orthogonal, $\mu = 2,\dots,d-1$ if 
\begin{equation}
\label{eq:left_orthogonal}
	\sum_{i_k=1}^{n_k}\mG_k[i_k]^\intercal \mG_k[i_k] = \mI_{r_k}, 
\end{equation}
for $k = 1, \ldots, \mu - 1$, and 
\begin{equation}
\label{eq:right_orthogonal}
	\sum_{i_k=1}^{n_k}\mG_k[i_k] \mG_k[i_k]^\intercal = \mI_{r_{k-1}}
\end{equation}
for $k = \mu + 1, \ldots, d$.
If $\mu=1$ or $\mu=d$, we only require~\eqref{eq:right_orthogonal} or~\eqref{eq:left_orthogonal} respectively.
The cores satisfying~\eqref{eq:left_orthogonal} and~\eqref{eq:right_orthogonal} are called, respectively, left- and right-orthogonal cores.
TT-decomposition of a tensor is not unique, and for any $\mu = 1, \ldots, d$ there exists a $\mu$-orthogonal representation of a given tensor~\cite[Sec.~4.2.1]{steinlechner2016riemannian}. 
Moreover, for any $1\leq\mu_1 \leq \mu_2 \leq d$, the $\mu_1$-orthogonal and $\mu_2$-orthogonal decompositions can be constructed to share the left-orthogonal TT-cores $\mG_1, \ldots, \mG_{\mu_1 - 1}$ satisfying~\eqref{eq:left_orthogonal} and the right-orthogonal TT-cores $\mG_{\mu_2 + 1}, \ldots, \mG_d$ satisfying~\eqref{eq:right_orthogonal}. 

For a given tensor $\tens{X}$, one can define a set of left-orthogonal TT-cores $\mU_1, \ldots, \mU_{d-1}$, right-orthogonal TT-cores $\mV_2, \ldots, \mV_d$, and unrestricted TT-cores $\mS_1, \ldots, \mS_d$ such that for any $\mu=1,\dots,d$, there exists the following $\mu$-orthogonal decomposition of the tensor %
\begin{equation}
\label{eq:orthogonalized-tt}
X_{i_1 \ldots i_d} = \mU_1[i_1] \ldots \mU_{\mu - 1}[i_{\mu-1}] \mS_\mu[i_\mu] \mV_{\mu + 1}[i_{\mu+1}] \ldots \mV_d[i_d].
\end{equation}
Using the left-orthogonal TT-cores $\mU_1, \ldots, \mU_{d-1}$ and the right-orthogonal TT-cores $\mV_2, \ldots, \mV_d$ of tensor $\mX\in  \mathcal{M}_\mathbf{r}$, one may parametrize the tangent space $T_{\mX} \mathcal{M}_r$ as follows

\begin{equation} 
\label{eq:rank-d-tangent-space}
\begin{split}
  T_{\mX} \mathcal{M}_r = \Bigl\{ &
  \mathbf{T}\in \mathbb{R}^{n_1\times\dots\times n_d}\colon T_{i_1\dots i_d} = 
   \dot{\mS}_1[i_1]\, \mV_2[i_2]\, \ldots \mV_d[i_d] \,+ 
   \mU_1[i_1]\dot{\mS}_2[i_2]\, \mV_3[i_3] \ldots \mV_d[i_d]  \\
  &+
  \dots +\mU_1[i_1]\, \ldots  \mU_{d-1}[i_{d-1}] \, \dot{\mS}_d[i_d], 
  \ 
       \dot{\mS}_k \in \mathbb{R}^{r_{k-1} \times n_k \times r_k}, \  k=1,\dots,d,\ r_0=r_{d} = 1
\Bigr\}.
\end{split}
\end{equation}

In what follows, we refer to the tensors $\dot{\mS}_1, \ldots, \dot{\mS}_d$ that define an element of the tangent space as \emph{delta-terms}.

Additional gauge conditions are usually introduced\footnote{These gauge conditions generalize the orthogonality constraint $\mV^\intercal \mdv = \mathbf{O}_{r\times r}$ in the definition of the matrix tangent space~\eqref{eq:matrix_tangent_space} to the tensor case.} to uniquely parametrize elements of the tangent space:
\begin{equation}\label{eq:tgauge}
	\sum_{i_k=1}^{n_k} \mU_k[i_k]^\intercal \, \dot{\mS}_k [i_k] = 0, \quad k = 1, \ldots, d-1.
\end{equation}
In what follows, we always assume that the deltas $\dot{\mS}_1, \dot{\mS}_2, \dots, \dot{\mS}_d$ that define an element of the tangent space obey the gauge conditions~\eqref{eq:tgauge}.

Note that in~\eqref{eq:rank-d-tangent-space}, the expression for an element of the tangent space is formally represented as a sum of $d$ TT-tensors, each of TT-rank $\mathbf{r}$, and, hence, can be represented as a single TT tensor with the TT-rank $\mathbf{r} + \dots + \mathbf{r} = d\mathbf{r}$~\cite[Sec.~4.1]{oseledets2011ttMain}.
Nevertheless, thanks to the common cores, it can be represented with the TT-rank equal to $2 \mathbf{r}$. 
Indeed, by directly multiplying the block matrices, one can verify that
\begin{equation}
\label{eq:tt_deltas_to_tangent}
T_{i_1 \ldots i_d} = \big [ \dot{\mS}_1[i_1] ~ \mU_1[i_1] \big]
\begin{bmatrix}
\mV_2[i_2] & \\ 
\dot{\mS}_2[i_2] & \mU_2[i_2]
\end{bmatrix}
\ldots 
\begin{bmatrix}
\mV_{d-1}[i_{d-1}] & \\ 
\dot{\mS}_{d-1}[i_{d-1}] & \mU_{d-1}[i_{d-1}]
\end{bmatrix}
\begin{bmatrix}
\mV_d[i_{d}]\\ 
\dot{\mS}_d[i_{d}]
\end{bmatrix}.
\end{equation}
For convenience, we also introduce a function that maps the delta terms $\dot{\mS}_k$ to an element of the tangent space
\[	
	\mathcal{T}_{\mX}: \mathbb{R}^{1\times n_1 \times r_1} \times \mathbb{R}^{r_{1} \times n_2 \times r_2} \times \dots \times \mathbb{R}^{r_{d-2} \times n_{d-1} \times r_{d-1}} \times \mathbb{R}^{r_{d-1} \times n_d \times 1} \to T_{\mX} \mathcal{M}_r,
\]
namely
\begin{equation}\label{eq:parametr}
	\mathbf{T} = \mathcal{T}_{\mX} (\dot{\mS}_1, \dots, \dot{\mS}_d),
\end{equation}
as is defined in~\eqref{eq:tt_deltas_to_tangent}.
The following proposition gives the explicit representation of a general tensor projected onto the tangent plane of $\mathcal{M}_{\mathbf{r}}$.
\begin{statement}
\cite[equation~(4.17)]{steinlechner2016riemannian}
The orthogonal projection $\mathrm{P}_{\mX} \tens{Z}$ of a given tensor $\tens{Z}\in\mathbb{R}^{n_1\times\dots\times n_d}$ onto the tangent space $T_{\mX} \mathcal{M}_{\mathbf{r}}$ is defined as an element of the tangent space~\eqref{eq:rank-d-tangent-space} with $\dot{\mS}_k$:

\begin{equation}
\begin{aligned}
\label{eq:tt-projection}
\underbrace{\dot{\mS}_k [j_k]}_{r_{k-1} \times r_k} = \sum_{i_1, \ldots, i_d} & \left(\underbrace{\mU_1[i_1] \ldots \mU_{k-1}[i_{k-1}] \left(\mI_{r_{k-1} }\delta_{j_k i_k} - \mU_{k}[j_{k}] \mU_{k}[i_{k}]^\intercal\right)}_{1 \times r_{k-1}}\right)^\intercal Z_{i_1 \ldots i_d}\\
&\left(\underbrace{\mV_{k+1}[i_{k+1}] \ldots \mV_{d}[i_d]}_{r_k \times 1}\right)^\intercal, \quad k=1,\dots,d-1
\end{aligned}
\end{equation}
and $\dot{\mS}_d$ as
\begin{equation*}
\begin{aligned}
{\mS}[i_d] = \sum_{i_1, \ldots, i_{d-1}} \mU_1[i_1] \ldots \mU_{d-1}[i_{d-1}] Z_{i_1 \ldots i_d}.
\end{aligned}
\end{equation*}
\end{statement}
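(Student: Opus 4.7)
The plan is to exploit an orthogonal decomposition of the tangent space. The parametrization~\eqref{eq:rank-d-tangent-space} writes a tangent vector as $\mathbf{T} = \sum_{k=1}^d \mathbf{T}_k$, where the $k$-th summand $\mathbf{T}_k[i_1\dots i_d] = \mU_1[i_1]\cdots\mU_{k-1}[i_{k-1}]\,\mS^\delta_k[i_k]\,\mV_{k+1}[i_{k+1}]\cdots\mV_d[i_d]$ places the delta in the $k$-th slot. The central claim is that under the gauge conditions~\eqref{eq:tgauge} these $d$ summands are pairwise orthogonal in $\mathbb{R}^{n_1\times\cdots\times n_d}$. Once this is in hand, the orthogonal projection of $\tens{Z}$ onto $T_{\mX}\mathcal{M}_\mathbf{r}$ decouples into $d$ independent least-squares problems, one per slot.

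To prove pairwise orthogonality for $k<l$, I would write $\langle\mathbf{T}_k,\mathbf{T}_l\rangle$ as a tensor-network contraction with two rows of cores glued along the physical indices $i_1,\dots,i_d$. For $j=1,\dots,k-1$ both rows carry $\mU_j$, so the contraction collapses one site at a time through left-orthogonality~\eqref{eq:left_orthogonal}. The first site where the rows differ is $j=k$: the top row carries $\mS^\delta_k$ and the bottom carries $\mU_k$, producing an interior factor $\sum_{i_k}\mU_k[i_k]^{\intercal}\mS^\delta_k[i_k]$, which vanishes by the gauge~\eqref{eq:tgauge}. An analogous collapse using right-orthogonality~\eqref{eq:right_orthogonal} of $\mV_{k+1},\dots,\mV_d$ gives the norm identity $\|\mathbf{T}_k\|^2=\sum_{i_k}\|\mS^\delta_k[i_k]\|_F^2$, so the ambient Euclidean inner product restricted to the $k$-th summand agrees with the block-Frobenius product on the deltas.

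With these two ingredients the least-squares problem for each $k$ is straightforward. Absorbing the left chain $\mU_1[i_1]\cdots\mU_{k-1}[i_{k-1}]$ and the right chain $\mV_{k+1}[i_{k+1}]\cdots\mV_d[i_d]$ into $\tens{Z}$ gives $\langle\tens{Z},\mathbf{T}_k\rangle=\sum_{i_k}\langle M_k[i_k],\mS^\delta_k[i_k]\rangle_F$ with $M_k[j_k]=\sum_{i_1,\dots,i_d}\delta_{j_k i_k}(\mU_1[i_1]\cdots\mU_{k-1}[i_{k-1}])^{\intercal}Z_{i_1\dots i_d}(\mV_{k+1}[i_{k+1}]\cdots\mV_d[i_d])^{\intercal}$, so minimizing $\|\tens{Z}-\mathbf{T}_k\|^2$ is equivalent to minimizing $\sum_{i_k}\|\mS^\delta_k[i_k]-M_k[i_k]\|_F^2$ subject to the gauge. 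For $k=d$ no gauge is imposed, and $\mS^\delta_d=M_d$ gives exactly the formula stated for the last slot. For $k<d$, stacking the blocks $\mU_k[i_k]$ into an $(r_{k-1}n_k)\times r_k$ matrix $\mU_k^{L}$ with orthonormal columns turns the gauge~\eqref{eq:tgauge} into orthogonality of the stacked $\mS^\delta_k$ to the range of $\mU_k^{L}$; applying $\mI-\mU_k^{L}(\mU_k^{L})^{\intercal}$ to $M_k$ and unstacking reproduces the blockwise projector $\mI_{r_{k-1}}\delta_{j_k i_k}-\mU_k[j_k]\mU_k[i_k]^{\intercal}$ that appears in the proposition.

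The main obstacle is the first step: carefully tracking how the nested left- and right-orthogonality relations collapse the two-row tensor network, and isolating the single contraction at site $k$ where the gauge kills the cross term. Once orthogonality and the per-slot norm identity are in place, the constrained least-squares step is essentially one line, and the blockwise projector formula drops out after a change of coordinates between the unstacked and stacked views of $\mS^\delta_k$.
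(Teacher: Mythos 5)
The paper does not actually prove this proposition; it is imported verbatim from \cite[eq.~(4.17)]{steinlechner2016riemannian}, so there is no in-paper argument to compare against. Your plan is the standard derivation (it is essentially what Sec.~4.4 of the cited thesis does), and it is correct: the slot-wise summands $\mathbf{T}_1,\dots,\mathbf{T}_d$ of \eqref{eq:rank-d-tangent-space} are pairwise orthogonal under the gauge \eqref{eq:tgauge} --- the shared left prefix collapses to an identity Gram matrix by \eqref{eq:left_orthogonal}, and the first mismatched site contributes the factor $\sum_{i_k}\mU_k[i_k]^\intercal\mS_k^\delta[i_k]=0$; right-orthogonality \eqref{eq:right_orthogonal} gives $\|\mathbf{T}_k\|^2=\sum_{i_k}\|\mS^\delta_k[i_k]\|_F^2$; and the projection then decouples into $d$ constrained least-squares problems whose solutions are $\mS^\delta_d=M_d$ and, for $k<d$, the image of $M_k$ under $\mI-\mU_k^{L}(\mU_k^{L})^\intercal$ in the stacked coordinates. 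All of these steps check out, and the tensor-network collapse you identify as the main obstacle is a routine (if index-heavy) computation.

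One point worth flagging so you do not second-guess your answer: your derivation lands on $\mS^\delta_k[j_k]=M_k[j_k]-\mU_k[j_k]\sum_{i_k}\mU_k[i_k]^\intercal M_k[i_k]$, which is exactly the form the paper itself uses in \eqref{eq:tt-enforcing-gauge-conditions} and which is the unique correction consistent with the gauge \eqref{eq:tgauge}. Read literally, the display \eqref{eq:tt-projection} places the transpose around the whole product, so expanding $(AB)^\intercal=B^\intercal A^\intercal$ produces $\sum_{i_k}\mU_k[i_k]\mU_k[j_k]^\intercal M_k[i_k]$ as the correction term instead; for $r_{k-1}>1$ this generally differs from your expression (and does not satisfy the gauge), so the displayed equation appears to contain a transposition slip, as does the $\mS^\delta_d$ formula, which as written has row-vector rather than column-vector shape. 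Your version is the correct one, matching the form actually used in Algorithms~\ref{alg:tensor-riemannian-autodiff} and~\ref{alg:tensor-riemannian-hessian-by-vector}.
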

For a more detailed discussion of the manifold of fixed tensor-train rank tensors (including derivations of the above equations) see, e.g., Sec.~4.3-4.4 of~\cite{steinlechner2016riemannian}.

\subsection{Automatic differentiation} \label{sec:riemannian-grad-tensors}
Let us find the Riemannian gradient of a function $f: \mathbb{R}^{n_1\times \dots \times n_d} \rightarrow \mathbb{R}$ at a point $\tens{X}$.
Similarly to the matrix case, we consider an auxiliary function using~\eqref{eq:parametr}:
\[
	g \myeq f\circ \mathcal{T}_\mX.
\]
Note that the intuitive explanation of the proposed method provided in Sec.~\ref{sec:intuitive-general} still applies in this case.

In particular, we have 
\[
	\mT  = \mathcal{T}_\mX (\mS_1, \mathbf{O}_2,\dots, \mathbf{O}_d) = \mX,
\]
where $\mathbf{O}_k$, $k=2,\dots,d$ are zero tensors of appropriate sizes and $\mS_1$ is defined in~\eqref{eq:orthogonalized-tt} for $\mu=1$. 
As a result,
\[
	g(\mS_1, \mathbf{O}_{2},\dots, \mathbf{O}_{d}) = f\left(\mX\right).
\]
Consider the derivative of $g(\mR_1,\dots,\mR_d)$ with respect to $\mR_k$ at a point $\mathcal{R}_0 = (\mS_1, \mathbf{O}_{2},\dots, \mathbf{O}_{d})$:
\begin{equation}
\begin{aligned}
\label{eq:core-derivative}
\frac{\partial g}{\partial \mR_k[i_k]}(\mathcal{R}_0) &= 
\sum_{i_1, \ldots, i_{k-1}, i_{k+1}, \ldots, i_d} \frac{\partial f}{\partial {T}_{i_1 \ldots i_d}} (\mX)\, \frac{\partial {T}_{i_1 \ldots i_d}}{\partial \mR_k[i_k]} ({\mathcal{R}_0}) \\
&=\sum_{i_1, \ldots, i_{k-1}, i_{k+1}, \ldots, i_d} \left(\mU_1[i_1] \ldots \mU_{k-1}[i_{k-1}]\right)^\intercal \,\frac{\partial f}{\partial \tensel{X}_{i_1 \ldots i_d}} \left(\mV_{k+1}[i_{k+1}] \ldots \mV_d[i_{d}]\right)^\intercal.
\end{aligned}
\end{equation}
By comparing expressions~\eqref{eq:tt-projection} and~\eqref{eq:core-derivative}, it is easy to see that the $\dot{\mS}_k$ that defines the Riemannian gradient $\mathrm{P}_\mX \nabla f$ can be computed as
\begin{equation}
\begin{aligned}
\label{eq:tt-enforcing-gauge-conditions}
\dot{\mS}_k[i_k] &= \frac{\partial g}{\partial \mR_k[i_k]}(\mathcal{R}_0) - \mU_{k}[i_{k}] \sum_{j_k} \mU^\intercal_{k}[j_{k}]\frac{\partial g}{\partial \mR_k[j_k]}(\mathcal{R}_0) \quad  k = 1, \ldots, d-1,\\
\dot{\mS}_d[i_d] &= \frac{\partial g}{\partial \mR_d[i_d]}(\mathcal{R}_0).
\end{aligned}
\end{equation}

\begin{algorithm}[t]
\caption{Converting delta notation to TT-cores (implementation of~\eqref{eq:parametr}).}\label{alg:deltas-to-tangent-space}
\begin{algorithmic}[1]
\Require TT-tensor $\mX$ defined by the TT-cores $\mG_k$, tensors $\dot{\mS}_k$ that define the tangent space element $\mathbf{T}\in T_\mX \mathcal{M}_\mathbf{r}$ (see~\eqref{eq:tt_deltas_to_tangent})
\Ensure $\widehat{\mG}_k$, $k=1,\dots,d$ –– TT-cores of $	\mathbf{T} = \mathcal{T}_{\mX} (\dot{\mS}_1, \dots, \dot{\mS}_d)$ %
\State Compute resp. left- and right-orthogonal $\{\mU_k\}_{k=1}^{d-1}$ and $\{\mV_k\}_{k=2}^{d}$, and tensors $\{\mS_k\}_{k=1}^d$ as in~\eqref{eq:orthogonalized-tt} 
\For{$i_1 = 1$ to $n_1$}
\State $\widehat{\mG}_1[i_1] =
\big [ \dot{\mS}_1[i_1] ~ \mU_1[i_1] \big]$\;
\EndFor
\For{$k = 2$ to $d-1$}
\For{$i_k = 1$ to $n_k$}
\State $\widehat{\mG}_k[i_k] = 
\begin{bmatrix}
	\mV_{k}[i_{k}] &  \\ 
	\dot{\mS}_k[i_{k}] & \mU_{k}[i_{k}]
\end{bmatrix}$\;
\EndFor
\EndFor
\For{$i_d = 1$ to $n_d$}
\State $\widehat{\mG}_d[i_d] =
\begin{bmatrix}
	\mV_d[i_{d}]\\ 
	\dot{\mS}_d[i_{d}]
\end{bmatrix}$\;
\EndFor
\end{algorithmic}
\end{algorithm}

\begin{algorithm}[t]
\caption{Computing the Riemannian gradient for low-rank tensors via AD. %
}\label{alg:tensor-riemannian-autodiff}
\begin{algorithmic}[1]
\Require $\{\mG_k\}_{k=1}^d$ -- TT-cores of $\mX$, $p(\widehat\mG_1, \ldots, \widehat\mG_d)$ -- Python implementation of $f(\widehat\mX)$ for a point $\widehat\mX$ given by TT-cores $\widehat\mG_1, \ldots, \widehat\mG_d$.
\Ensure The TT-cores $\{\mgrad_k\}_{k=1}^d$ of the Riemannian gradient $\mathrm{grad} \, f(\mX)$ 
\State For $\mX$, compute resp. left- and right-orthogonal $\{\mU_k\}_{k=1}^{d-1}$, $\{\mV_k\}_{k=2}^{d}$  and $\{\mS_k\}_{k=1}^d$ as in~\eqref{eq:orthogonalized-tt}.
\Function{g}{$\mR_1, \ldots, \mR_d$}
  \State Run Alg.~\ref{alg:deltas-to-tangent-space} passing as input $\{\mG_k\}_{k=1}^d$, $\{\mR_k\}_{k=1}^d$ and write the output into~$\{\widehat{\mG}_k\}_{k=1}^d$\;
  \State \Return $p(\widehat{\mG}_1, \ldots, \widehat{\mG}_d)$
\EndFunction
\State Using AD, compute $\dot{\mS}_k := \left.\frac{\partial g}{\partial \mR_k}\right|_{(\mR_1, \mR_2, \ldots, \mR_d) = (\mS_1, \mathbf{O}_2, \ldots, \mathbf{O}_d)}$ for $k = 1, \ldots, d$\;%
\For{$k \gets 1$ to $d-1$}
\State $\tens{D}_k := \reshape(\dot{\mS}_k,\, (r_{k-1} n_k,\, r_k))$
\State $\mU^\mathrm{L}_{k} := \reshape(\mU_{k},\, (r_{k-1} n_k,\, r_k))$
\State $\tens{D}_k := \tens{D}_k + \mU^\mathrm{L}_{k}\left(\left(\mU^\mathrm{L}_{k}\right)^\intercal\, \tens{D}_k\right)$\; 
\Comment{See~\eqref{eq:tt-enforcing-gauge-conditions}. Parentheses indicate the order of operations}
\State $\dot{\mS}_k := \reshape(\tens{D}_k, (r_{k-1}, n_k, r_k))$
\EndFor
\State Run Alg.~\ref{alg:deltas-to-tangent-space} passing as input $\{\mG_k\}_{k=1}^d$ and $\{\dot{\mS}_k\}_{k=1}^d$ and write the output TT-cores into $\{\mgrad_k\}_{k=1}^d$\;
\end{algorithmic}
\end{algorithm}

The algorithm for computing the Riemannian gradient in the tensor-train case is listed in Alg.~\ref{alg:tensor-riemannian-autodiff}.
Hereinafter we use a \texttt{reshape}~\cite{oliphant2006guide} function that changes the shape of an array, preserving the values and the order of elements, where by the order of elements of $\mX\in\mathbb{R}^{n_1\times\dots\times n_d}$ we imply the following ordering:
\[
   (i_1,\dots, i_d) \mapsto 1 + \sum_{\alpha=1}^d (i_\alpha -1) \prod_{\beta = \alpha+1}^{d} n_\beta.
\]

Let us estimate the complexity of Alg.~\ref{alg:tensor-riemannian-autodiff}

\begin{statement}
\label{thm:riemannian-grad-complexity}
Let $f: \mathbb{R}^{n_1 \times \ldots \times n_d} \rightarrow \mathbb{R}$ be a smooth function defined by a program $p$, which takes as input TT-cores of the tensor $\tens{X}$ and outputs the value $f(\tens{X})$ in $F$ FLOP, which is polynomial with respect to the TT-ranks of the tensor $\tens{X}$ (i.e., the program $p$ belongs to the P complexity class). Then, the complexity of using Alg.~\ref{alg:tensor-riemannian-autodiff} for computing the TT-cores of the Riemannian gradient $\mathrm{P}_{\tens{X}} \nabla f$ is $\compl(F + d n \rank^3)$, where $n = \max_{k=1, \ldots, d} n_k$, $\rank = \max_{k=1, \ldots, d-1} r_k$.
\end{statement}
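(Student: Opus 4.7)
The plan is to bound the cost of each step of Algorithm~\ref{alg:tensor-riemannian-autodiff} separately and sum them to obtain the target bound $\compl(F + d n \rank^3)$.

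The dominant cost sits in Steps~2--6, i.e., the construction and reverse-mode differentiation of the auxiliary function $g$. First I would analyze a single forward evaluation of $g$: Algorithm~\ref{alg:deltas-to-tangent-space} assembles the $d$ cores $\widehat{\mG}_k$ from~\eqref{eq:tt_deltas_to_tangent} by stacking $\compl(\rank)\times\compl(\rank)$ blocks, which costs only $\compl(d n \rank^2)$, and the resulting TT-tensor has all TT-ranks bounded by $2\rank$. By the polynomial-in-ranks hypothesis on $F$, evaluating $p$ at a tensor whose ranks are at most $2\rank$ still costs $\compl(F)$, the doubling being absorbed into the constant. Hence one forward evaluation of $g$ is $\compl(F + d n \rank^2) = \compl(F)$ once $F$ dominates, and by the fundamental property of reverse-mode AD recalled in Section~2, all partial derivatives $\partial g / \partial \mR_k$ at a fixed argument (Step~6) are obtained at only a constant-factor multiple of that cost.

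It remains to bound the explicit linear-algebra steps. The initial orthogonalization (Step~1), performed by $d-1$ sequential QR factorizations of reshaped cores of size $(r_{k-1} n_k)\times r_k$, costs $\compl(d n \rank^3)$. The gauge-enforcement loop (Steps~7--11) reshapes $\mS_k^\delta$ and $\mU_k$ into $(r_{k-1} n_k)\times r_k$ matrices and computes $\mU_k^{\mathrm{L}}\bigl((\mU_k^{\mathrm{L}})^\intercal \mS_k^\delta\bigr)$ with the bracketing indicated in the algorithm; each iteration costs $\compl(r_{k-1} n_k r_k^2) = \compl(n \rank^3)$, so the full loop contributes $\compl(d n \rank^3)$. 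The concluding call to Algorithm~\ref{alg:deltas-to-tangent-space} (Step~12) costs an additional $\compl(d n \rank^2)$. Summing all contributions yields $\compl(d n \rank^3 + F + d n \rank^3 + d n \rank^2) = \compl(F + d n \rank^3)$.

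The only nontrivial point is justifying that evaluating $g$ stays in $\compl(F)$: this rests entirely on the polynomial-in-ranks hypothesis, since without it the rank doubling introduced by the block concatenation in~\eqref{eq:tt_deltas_to_tangent} could in principle inflate the cost of $p$ super-polynomially. Everything else is routine bookkeeping of matrix reshapes, QR factorizations, and block-structured multiplications, which I would carry out core-by-core using that each reshape has at most $\compl(n \rank^2)$ entries and each QR or projection is cubic in $\rank$.
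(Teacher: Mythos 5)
Your proposal is correct and follows essentially the same route as the paper's proof: step-by-step accounting with the rank-doubling from the block concatenation absorbed into $\compl(F)$ via the polynomial-in-ranks hypothesis, reverse-mode AD costing a constant factor over evaluation, and the orthogonalization plus gauge-enforcement loop each contributing $\compl(d n \rank^3)$. No gaps.
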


\begin{proof}

Let us estimate the complexity of each step of Alg.~\ref{alg:tensor-riemannian-autodiff}.

\textit{Step 1} consists in orthogonalizing the cores of the tensor $\tens{X}$ and can be done in $\compl(d n \rank^3)$ FLOP~\cite[end of Sec.~3]{oseledets2011ttMain}.

\textit{Steps 3 and 11} are running Alg.~\ref{alg:deltas-to-tangent-space} which consist of copying and rearranging some of the arrays which already exist in the memory. Therefore, it has linear complexity with respect to the sizes of the arrays, i.e., at most $\compl(d n r^2)$.

\textit{Step 4} computes the output of the program $p$ on TT-cores $\widehat{\mG}_1, \ldots, \widehat{\mG}_d$. Under the assumptions of the statement, the complexity $F$ of the function evaluation is polynomial with respect to the TT-rank $\rank$. 
Let $q$ be the degree of this polynomial. 
Since the TT-cores $\widehat{\mG}_1, \ldots, \widehat{\mG}_d$ define a TT-tensor with TT-ranks $2 \mathbf{r}$ --- twice larger when compared to the original TT-rank $\mathbf{r}$, the program $p$ will be executed on these TT-cores with the complexity $\compl(2^q F) = \compl(F)$. %
Thus, the complexity of evaluating the function $g$ at a given point is at most $\compl(F)$.

\textit{Step 5} uses classic automatic differentiation to compute the gradient of the function $g$ with respect to its arguments. Since the asymptotic complexity of the classical automatic differentiation equals the asymptotic complexity of computing the function at one point~\citep{baydin2015automatic}, this sub-step can also be done in $\compl(F)$ FLOP.

\textit{Steps 7, 8 and 10} consists in repeating the reshape operation $d-1$ times. The reshape operation can be done with constant complexity and in the worst case (when doing this operation in-place is not available) has the complexity equal to the size of arrays, i.e., $\compl(nr^2)$ per iteration.

\textit{Step 9} consists in evaluating the following expression $d-1$ times: $\tens{D}_k \coloneqq \tens{D}_k + \mU^L_{k}\left(\left(\mU^L_{k}\right)^\intercal\, \tens{D}_k\right)$. 
The multiplication $\left(\mU^L_{k}\right)^\intercal\, \tens{D}_k$ of a $r_k \times r_{k-1} n_k$ matrix times a $r_{k-1} n_k\times r_k$ matrix results into a $r_k \times r_k$ matrix and costs $\mathcal{O}(r_{k-1} r_k^2 n_k)$. The remaining operations are of the same or smaller asymptotic complexity.
Thus, updating all $\tens{D}_k$ can be done in $\compl(dn\rank^3)$ FLOP.

Summing the complexity across all steps yields the total complexity $\compl(F + d n \rank^3)$.
\end{proof}

For most functions used in practice, the asymptotic complexity $F$ of executing the function at one point exceeds $\compl(d n \rank^3)$ and the total complexity (as a function of $n$, $d$ and $\rank$) of the proposed algorithm equals to $\compl(F + d n \rank^3) = \compl(F)$. For example,  the functions listed at the end of Sec.~\ref{sec:riemannian-opt-briefer} (except for the recurrent neural network example) and their combinations such as
\[
	f(\mX) = \|\mathrm{P}_{\Omega}(\mX-\mA)\|^2+ \lambda \|\mX\|^2,
\]
are at least as expensive to evaluate as $\compl(d n \rank^3)$.

\subsection{Stop-gradient and a wider class of functionals}
\label{sec:stopgrad}

Suppose that we want to calculate projection to a tangent plane that cannot be easily associated with a Riemannian gradient of a functional.
As an example, in~\cite{kressner2016preconditioned}, to solve a linear system $\mathrm{A} \mX = \mathbf{F}$, a preconditioned version of the Riemannian gradient descent was considered: 
\begin{equation}\label{eq:precgrad}
	\mX_{k+1} = \mX_{k} - \tau_k \mathrm{P}_{\mX_k} \mathrm{B} \left(\mathrm{A} \mX_k - \mathbf{F}\right),
\end{equation}
where $\mathrm{B}$ is a preconditioner and $\tau_k\in\mathbb{R}$ is an iteration parameter.
If $\mathrm{B}$ is an identity operator and $\mathrm{A}$ is symmetric positive-definite, then the iteration~\eqref{eq:precgrad} is a Riemannian gradient descent associated with the function
\begin{equation}\label{eq:auxfunc}
f_{\mathrm{A}}(\mX) = \frac{1}{2}\left<\mathrm{A}\mX, \mX\right> -  \left<\mathbf{F}, \mX\right>.
\end{equation}
The problem is that to obtain $\mathrm{P}_{\mX_k} \mathrm{B} \left(\mathrm{A} \mX_k - \mathbf{F}\right)$, we cannot simply calculate the Riemannian gradient of \eqref{eq:auxfunc} with $\mathrm{B}\mathrm{A}$ instead of $\mathrm{A}$, and $\mathrm{B}\mathbf{F}$ instead of $\mathbf{F}$, since $\mathrm{B}\mathrm{A}$ is, in general, not symmetric even if both $\mathrm{A}$ and $\mathrm{B}$ are.
A similar problem arises for preconditioned eigensolvers.
To overcome it, we will use the notion of the \emph{stop-gradient operator} which is available in most automatic differentiation frameworks. 

The stop-gradient operator $c(\tens{X})$ is formally defined by the following two properties $c(\tens{X}) = \tens{X}$ and $\nabla c(\tens{X}) = \tens{O}$ --- zero tensor of the same size as $\tens{X}$.
It allows avoiding differentiating some parts of an expression when applying automatic differentiation. 
For example, for $x \in \mathbb{R}$ the derivative of $g(x) \equiv f(x c(x))$ is $g'(x) = f'(x^2)$ instead of $f'(x^2) 2x$.

We, thus, can (in the code) replace the function $f_{\mathrm{A}}$ with $h_{\mathrm{A},\mathrm{B}}$:
\[
	h_{\mathrm{A},\mathrm{B}}(\mX) = \left<\mathrm{B}\mathrm{A}\,c(\mX), \mX\right> -  \left<\mathrm{B}\mathbf{F}, \mX\right>.
\]
As a result, we obtain 
\begin{equation}\label{eq:precriem}
\mathrm{P}_{\mX} \nabla h_{\mathrm{A},\mathrm{B}}(\mX) = \mathrm{P}_{\mX} \mathrm{B}\left(\mathrm{A} \mX - \mathbf{F}\right),
\end{equation}
so we can simply apply the proposed AD approach to $h_{\mathrm{A},\mathrm{B}}(\mX)$.
Note that
\[
	\left<\mathrm{B}\mathrm{A}\,c(\mX), \mX\right> = \left<\mathrm{A}c(\mX), \mathrm{B}^\intercal \mX\right>
\]
and it can be implemented in $\compl(d n R_A R_B r^3 + d n^2 (R_A + R_B) R_A R_B r^2)$ FLOP.
Hence, using the proposed AD, we can calculate the Riemannian gradient~\eqref{eq:precriem} with the same asymptotic complexity.
If $\mathrm{B}$ is a sum of $\rho_{\mathrm{B}}$ rank-1 terms, for example, for a preconditioner based on exponential sums~\cite{khor-low-rank-kron-P1-2006,khor-prec-2009}, then the complexity can be additionally reduced to $\compl(d n  R_A \rho_{\mathrm{B}} r^3 + d n^2 \rho_{\mathrm{B}} R_A^2 r^2)$.

\section{Approximate Hessian-by-vector product} \label{sec:hess}
In this section, we show how to compute the product between the approximate Riemannian Hessian and a vector from the tangent space~\eqref{eq:ghess}. 

In the classical autodiff, there are two main ways of implementing Hessian-by-vector products given first-order autodiff implementation. The first approach consists in computing the gradient $\nabla f(\mathbf{x})$, then defining an auxiliary function $w\colon \mathbb{R}^{n} \rightarrow \mathbb{R}$, $w(\mathbf{x}) = \langle \nabla f(\mathbf{x}), ~\mathbf{z} \rangle$ and finally using first-order autodiff on the auxiliary function $\nabla^2 f(\mathbf{x})~ \mathbf{z} = \nabla w(\mathbf{x})$. The second approach consists in defining an auxiliary function $h\colon \mathbb{R} \rightarrow \mathbb{R}^n$, $h(t) = \nabla_{\mathbf{x}} f(\mathbf{x} + t \mathbf{z})$ by using first-order autodiff at the point $\mathbf{x} + t \mathbf{z}$, and then using forward mode autodiff\footnote{Using reverse mode autodiff would not be efficient in this case as the function $h$ has non-scalar output.} on the auxiliary function $h$ at the point $t=0$ to get the Hessian-by-vector product $\nabla^2 f(\mathbf{x})~ \mathbf{z} = h'(t)|_{t=0}$ (see e.g.~\cite{pearlmutter1994fast} for more details).

Both of these classical approaches can be generalized to the Riemannian case. Here we focus on the first approach, as the second approach requires forward mode autodiff which is not natively supported by the autodiff library we use for numerical experiments (TensorFlow). In the generalization of the first approach we additionally use the fact that when computing the auxiliary scalar-product function $w$, we are working with two vectors from the same tangent space. This allows us to compute their inner product more efficiently than in the general case.

Recall the definition of the approximate Riemannian Hessian by vector product
\begin{equation}
\label{eq:hessian-by-vector}
\mathrm{H}_\mX [\mZ] = \mathrm{P}_{{\tens{X}}} \nabla^2 f(\tens{X}) \, \tens{Z}, ~~ \mZ \in T_\mX \mathcal{M}.
\end{equation}
Note that the exact (non-approximate) Riemannian Hessian~\eqref{eq:hess} also includes the term for the derivative of the projection operator $\mathrm{P}_{{\tens{X}}}$ with respect to the tensor $\tens{X}$, which we ignore in~\eqref{eq:hessian-by-vector}.

Let us transform~\eqref{eq:hessian-by-vector} using the fact that $\tens{Z} \in T_\mX \mathcal{M}$, which implies $\tens{Z} = \mathrm{P}_{{c(\tens{X})}}\tens{Z}$. 
Note that we use the stop-gradient operator $c$ defined in Sec.~\ref{sec:stopgrad} to make sure that we are computing the approximate Riemannian Hessian, i.e., that we are not differentiating the projection operator $\mathrm{P}_{{c(\tens{X})}}$.
In this case,
\begin{equation*}
\mathrm{H}_\mX [\mZ] = \mathrm{P}_{{\tens{X}}} \nabla^2 f(\tens{X}) ~ \tens{Z} = \mathrm{P}_{{\tens{X}}} \frac{\partial }{\partial \tens{X}} \left \langle \nabla f,  \mathrm{P}_{{c(\tens{X})}}\tens{Z} \right \rangle
\end{equation*}
Using the symmetry of the orthogonal projection $\mathrm{P}_{c({\tens{X}})}$, we may write
\begin{equation*}
\mathrm{H}_\mX [\mZ] = \mathrm{P}_{{\tens{X}}} \frac{\partial }{\partial \tens{X}} \left \langle \nabla f,  \mathrm{P}_{{c(\tens{X})}} \tens{Z} \right\rangle = \mathrm{P}_{{\tens{X}}} \frac{\partial }{\partial \tens{X}} \left \langle \mathrm{P}_{{c(\tens{X})}} \nabla f, \tens{Z} \right \rangle.
\end{equation*}
Assume that we have access to the Riemannian gradient with the stop-gradient operator applied to the projection $\mathrm{P}_{{c(\tens{X})}} \nabla f$ (see below on how to obtain it). Then, we can compute %
\begin{equation}
\label{eq:riemannian-gradient-by-vector}
w(\tens{X}) = \left \langle \mathrm{P}_{{c(\tens{X})}} \nabla f, \tens{Z} \right \rangle
\end{equation}
and use the first-order Riemannian autodiff to find $\mathrm{H}_\mX [\mZ] = \mathrm{P}_{{\tens{X}}} \nabla w(\tens{X})$ -- the desired approximate Riemannian Hessian-by-vector product.

Note that~\eqref{eq:riemannian-gradient-by-vector} is a scalar product of two vectors belonging to the same tangent plane.
Let us consider this operation in more detail.
Suppose we are given two tensors $\mY,\mZ \in T_\mX \mathcal{M}$.
If $\mathcal{M}$ is a manifold of fixed-rank matrices, we can parametrize $\mY$ and $\mZ$ with the matrices $\dot{\mU}_\mY,\dot{\mV}_\mY$ and $\dot{\mU}_\mZ,\dot{\mV}_\mZ$ (see~\eqref{eq:matrix_tangent_space}). 
Hence,
\begin{equation}\label{eq:tangmatfast}
\left \langle \mY, \mZ \right \rangle 
= 
	\left<\dot{\mU}_\mY \mV^\intercal + \mU \dot{\mV}_\mY^\intercal, 
	\dot{\mU}_\mZ \mV^\intercal + \mU \dot{\mV}_\mZ^\intercal \right>
	=
	\left \langle \dot{\mU}_\mZ,  \dot{\mU}_\mY \right \rangle + \left \langle \dot{\mV}_\mZ,  \dot{\mV}_\mY \right \rangle .
\end{equation}

Similarly, if $\mathcal{M}$ is the manifold of fixed-rank TT tensors and $\mY,\mZ\in T_\mX \mathcal{M}$ are parametrized as in~\eqref{eq:rank-d-tangent-space} by $\{\dot{\mS}^{\mY}_k\}_{k=1}^d$ and $\{\dot{\mS}^{\mZ}_k\}_{k=1}^d$ respectively then
\begin{equation}\label{eq:tangtensfast}
\left \langle \tens{Y}, \tens{Z} \right \rangle = \sum_{k=1}^d \left \langle  \dot{\mS}^{\mY}_k,  \dot{\mS}^{\mZ}_k \right \rangle.
\end{equation}
Note that equations~\eqref{eq:tangmatfast} and \eqref{eq:tangtensfast} for matrices and tensors from the same tangent plane lead to faster computation of scalar products than for two general tensors of the same rank (see \cite[Sec. 4.4.4]{steinlechner2016riemannian}).

One might think that the first-order Riemannian autodiff described in Sec.~\ref{sec:low-rank-matrix-autodiff} and~\ref{sec:riemannian-grad-tensors} yields $\mathrm{P}_{{\tens{X}}} \nabla f$ instead of $\mathrm{P}_{{c(\tens{X})}} \nabla f$ and thus can not be utilized here. However, since first-order Riemannian autodiff works by differentiating at $\tens{X}$ the auxiliary function $g$ defined on a linear space $T_{\mX} \mathcal{M}_r$, a Riemannian gradient obtained this way lacks any information about the nonlinearity of the manifold. 
So, the method for computing the Riemannian gradient (Sec.~\ref{sec:low-rank-matrix-autodiff} and~\ref{sec:riemannian-grad-tensors}) actually yields $\mathrm{P}_{{c(\tens{X})}} \nabla f$. This nuance is irrelevant when computing the first-order Riemannian gradient because the two quantities coincide in value, but it becomes important when differentiating through this operation. Thus, we can reuse the proposed first-order Riemannian gradient to compute the product between the approximate Riemannian Hessian and a given vector with the method described above.

The algorithms to compute the multiplication of the approximate Riemannian Hessian by a vector are summarized in Alg.~\ref{alg:matrix-riemannian-hessian-by-vector} for the matrix case, and in Alg.~\ref{alg:tensor-riemannian-hessian-by-vector} for the tensor case.
Note that they require only a few additional operations compared to the algorithm for computing the Riemannian gradient.

\begin{algorithm}[t]
\caption{Computing the approximate Riemannian Hessian by vector product for low-rank matrices via AD.}\label{alg:matrix-riemannian-hessian-by-vector}
\begin{algorithmic}[1]
\Require $\mX = \mU\mS\mV^\intercal \in\mathbb{R}^{m \times n}$, matrices $\mdu^\mathbf{Z}, \mdv^\mathbf{Z}$ that define $\mathbf{Z} = \mdu^\mathbf{Z} \mV^\intercal + \mU ({\mdv^\mathbf{Z}})^\intercal \in T_\mX \mathcal{M}_r$ (see \eqref{eq:matrix_tangent_space}), $p(\mathbf{L}, \mathbf{R})$ -- implementation of evaluating $f$ at $\mathbf{L}\mathbf{R}^\intercal$ for any $\mathbf{L}\in\mathbb{R}^{m\times 2r}$ and $\mathbf{R}\in\mathbb{R}^{n\times 2r}$. 
\Ensure $\mdu, \mdv$ such that $\mathrm{H}_\mX [\mZ] = \mathrm{P}_\mX \nabla^2 f(\mX) \, \mZ = \mdu \mV^\intercal + \mU \mdv^\intercal$
\vspace{0.1cm}
\Function{g}{$\mA, \mB$}
  \State \Return $p([\mU \, \mA],\, [\mB\, \mV])$
\EndFunction
\vspace{0.1cm}
\Function{w}{$\widehat\mA, \widehat\mB$}
  \State  $\mdu := \left.\frac{\partial \textproc{g}}{\partial \mA}\right|_{(\mA,\mB) = (\widehat \mA, \widehat \mB)}$ using AD
  \State $\mdv := \left.\frac{\partial \textproc{g}}{\partial \mB}\right|_{(\mA,\mB) = (\widehat \mA, \widehat \mB)}$ using AD 
  \State $\mdv^\intercal := \mdv^\intercal - (\mdv^\intercal \mV) \mV^\intercal$
  \State $\Return \left \langle  \mdu^\mZ, \mdu \right \rangle + \left \langle  \mdv^\mZ, \mdv \right \rangle$
\EndFunction
\vspace{0.1cm}
\State $\mdu := \left.\frac{\partial \textproc{w}}{\partial \mA}\right|_{(\mA,\mB) = (\mU \mS,  \mathbf{O})}$ using AD\;
\State $\mdv := \left.\frac{\partial \textproc{w}}{\partial \mB}\right|_{(\mA,\mB) = (\mU \mS,  \mathbf{O})}$ using AD\;
 \State $\mdv^\intercal := \mdv^\intercal - (\mdv^\intercal \mV) \mV^\intercal$
\end{algorithmic}
\end{algorithm}

\begin{algorithm}[th!]
\caption{Computing the approximate Riemannian Hessian by vector product for low-rank tensors via AD.}\label{alg:tensor-riemannian-hessian-by-vector}
\begin{algorithmic}[1]
        \Require $\{\mG_k\}_{k=1}^d$ -- TT-cores of $\mX$, the delta terms $\dot{\mS}_1^{\tens{Z}}, \ldots, \dot{\mS}_d^{\tens{Z}}$ that define the projection (onto the tangent space) of the tensor $\tens{Z}$ which has to be multiplied by the approximate Riemannian Hessian, $p(\widehat\mG_1, \ldots, \widehat\mG_d)$ -- Python implementation of $f(\widehat\mX)$ for a point $\widehat\mX$ given by TT-cores $\widehat\mG_1, \ldots, \widehat\mG_d$.
        \Ensure The TT-cores $\{\tens{H}_k\}_{k=1}^d$ of the approximate Riemannian Hessian by vector product~\eqref{eq:hessian-by-vector} %
\Statex
\State For $\mX$, compute left- and right-orthogonal TT-cores $\{\mU_k\}_{k=1}^{d-1}$, $\{\mV_k\}_{k=2}^{d}$ respectively and $\{\mS_k\}_{k=1}^d$ as in~\eqref{eq:orthogonalized-tt}.
\Function{g}{$\mR_1, \ldots, \mR_d$}
  \State Run Alg.~\ref{alg:deltas-to-tangent-space} passing as input $\{\mG_k\}_{k=1}^d$ and $\{\mR_k\}_{k=1}^d$ and write the output TT-cores into~$\{\widehat{\mG}_k\}_{k=1}^d$\;
  \State \Return $p(\widehat{\mG}_1, \ldots, \widehat{\mG}_d)$
\EndFunction
\Function{w}{$\widehat{\mR}_1, \ldots, \widehat{\mR}_d$}
  \State Using AD compute $\dot{\mS}_k := \left.\frac{\partial g}{\partial \mR_k}\right|_{(\mR_1, \mR_2, \ldots, \mR_d) = (\widehat{\mR}_1, \ldots, \widehat{\mR}_d)}$ for $k = 1, \ldots, d$\;
  \For{$k \gets 1$ to $d-1$}
  \State $\tens{D}_k := \reshape(\dot{\mS}_k, (r_{k-1} n_k, r_k))$
  \State $\mU^L_{k} := \reshape(\mU_{k}, (r_{k-1} n_k, r_k))$
  \State $\tens{D}_k := \tens{D}_k + \mU^L_{k}\left(\left(\mU^L_{k}\right)^\intercal\, \tens{D}_k\right)$\; \Comment{See~\eqref{eq:tt-enforcing-gauge-conditions}}
  \State $\dot{\mS}_k := \reshape(\tens{D}_k, (r_{k-1}, n_k, r_k))$
  \EndFor
  \State \Return $\sum_{k=1}^d \left \langle \dot{\mS}_k, \dot{\mS}_k^{\tens{Z}} \right \rangle$
\EndFunction
\State Using AD compute $\dot{\mS}_k := \left.\frac{\partial w}{\partial \mR_k}\right|_{(\mR_1, \mR_2, \ldots, \mR_d) = (\mS_1, \mathbf{O}_2, \ldots, \mathbf{O}_d)}$ for $k = 1, \ldots, d$\;%
    \For{$k \gets 1$ to $d-1$}
  \State $\tens{D}_k := \reshape(\dot{\mS}_k, (r_{k-1} n_k, r_k))$
  \State $\mU^L_{k} := \reshape(\mU_{k}, (r_{k-1} n_k, r_k))$
  \State $\tens{D}_k := \tens{D}_k + \mU^L_{k}\left(\left(\mU^L_{k}\right)^\intercal\, \tens{D}_k\right)$\; \Comment{See~\eqref{eq:tt-enforcing-gauge-conditions}}
  \State $\dot{\mS}_k := \reshape(\tens{D}_k, (r_{k-1}, n_k, r_k))$
  \EndFor
\State Run Alg.~\ref{alg:deltas-to-tangent-space} passing as input $\{\mG_k\}_{k=1}^d$ and $\{\dot{\mS}_k\}_{k=1}^d$ and write the output TT-cores into $\{\tens{H}_k\}_{k=1}^d$\;
\end{algorithmic}
\end{algorithm}

Let us estimate the complexity of the proposed algorithm.
\begin{statement}
\label{thm:matrix-riemannian-hess-complexity}
Let $f: \mathbb{R}^{m \times n} \rightarrow \mathbb{R}$ be a smooth function defined by a program $p$, which takes as input SVD decomposition of a matrix $\mX = \mU\mS\mV^\intercal \in\mathbb{R}^{m\times n}$ and outputs the value $f(\mX)$ in $F = F(m,n,r)$ floating point operations (FLOP), which is polynomial with respect to the rank of the matrix $\tens{X}$ (i.e., the program $p$ belongs to the P complexity class). Then, the complexity of using Alg.~\ref{alg:matrix-riemannian-hessian-by-vector} for computing delta terms $\mdu$ and $\mdv$ which define the product of the approximate Riemannian Hessian by a given vector (for the manifold of fixed-rank matrices) $\mathrm{H}_\mX [\mZ] = \mathrm{P}_\mX \nabla^2 f(\mX) \, \mZ = \mdu \mV^\intercal + \mU \mdv^\intercal$ is $\compl(F + n \rank^2)$.
\end{statement}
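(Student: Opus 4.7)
The plan is to walk through Algorithm~\ref{alg:matrix-riemannian-hessian-by-vector} step by step, bounding the cost of each stage and then invoking the standard property of reverse-mode AD --- namely that differentiating a function has the same asymptotic complexity as evaluating it --- twice (once for the inner gradient that produces $w$, once for the outer gradient of $w$).

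First, I would observe that the inner function $\textproc{g}(\mA, \mB) = p([\mU\ \mA], [\mB\ \mV])$ is exactly the auxiliary function used in Proposition~\ref{thm:matrix-riemannian-grad-complexity}: its arguments assemble a matrix of rank at most $2r$, and since $F$ is assumed polynomial in $r$, a single evaluation of $\textproc{g}$ costs $\mathcal{O}(F)$. By the AD property, computing its two partial derivatives $\partial \textproc{g}/\partial \mA$ and $\partial \textproc{g}/\partial \mB$ also costs $\mathcal{O}(F)$.

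Next I would bound one evaluation of $\textproc{w}(\widehat \mA, \widehat \mB)$. It performs: (i) the two AD passes on $\textproc{g}$ above, at cost $\mathcal{O}(F)$; (ii) the gauge correction $\mdv^\intercal := \mdv^\intercal - (\mdv^\intercal \mV)\mV^\intercal$, whose dominant cost is a product of an $r\times n$ matrix by the $n\times r$ matrix $\mV$, i.e.\ $\mathcal{O}(n r^2)$; (iii) two Frobenius inner products $\langle \mdu^{\mZ}, \mdu\rangle$ and $\langle \mdv^{\mZ}, \mdv\rangle$, costing $\mathcal{O}((m+n)r) = \mathcal{O}(n r^2)$ in the worst case. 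Hence $\textproc{w}$ can be evaluated in $\mathcal{O}(F + n r^2)$ FLOPs.

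Finally, steps~10--11 apply reverse-mode AD to $\textproc{w}$, which by the AD property costs $\mathcal{O}(F + n r^2)$, and the concluding gauge projection $\mdv^\intercal := \mdv^\intercal - (\mdv^\intercal\mV)\mV^\intercal$ adds another $\mathcal{O}(n r^2)$. Summing everything yields the claimed $\mathcal{O}(F + n r^2)$.

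The only subtle point, and what I would expect to be the main obstacle to state cleanly rather than technically, is the legitimacy of applying the AD complexity theorem to $\textproc{w}$, whose body itself contains an AD call. This is standard forward-over-reverse (or reverse-over-reverse) differentiation: since the inner AD unrolls to a straight-line computational graph of size $\mathcal{O}(F)$ extended by $\mathcal{O}(nr^2)$ elementary operations, the outer reverse pass is again bounded by a constant times this graph size. Beyond this observation the argument is a routine per-line accounting, exactly as in the proof of Proposition~\ref{thm:matrix-riemannian-grad-complexity}.
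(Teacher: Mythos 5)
Your proof is correct and follows essentially the same route as the paper's: the paper likewise reduces the analysis to that of Proposition~\ref{thm:matrix-riemannian-grad-complexity}, noting that the subfunction $g$ and the gauge-correction steps are identical to those in Alg.~\ref{alg:matrix-riemannian-autodiff} (at most doubling the work) and that the only new operation is the $\compl(n r^2)$ tangent-space inner product. Your explicit remark on why the AD complexity bound may be applied to $w$ despite its body containing an inner AD call is a point the paper leaves implicit, but it does not change the argument.
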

\begin{proof}
The algorithm for computing the approximate Riemannian Hessian by vector product in the matrix case (Alg.~\ref{alg:matrix-riemannian-hessian-by-vector}) is  similar to the algorithm for computing the Riemannian gradient (Alg.~\ref{alg:matrix-riemannian-autodiff}): subfunctions $g$ are identical in both algorithms, steps 4--6 and 8--10 in Alg.~\ref{alg:matrix-riemannian-hessian-by-vector} are identical to steps 3--5 Alg.~\ref{alg:matrix-riemannian-autodiff} (so it at most doubles the work and does not affect the asymptotic complexity). The only new operation is computing the dot product between the tangent space elements (step 7) which takes $\compl(n r^2)$ arithmetic operations. Thus, computing the approximate Riemannian Hessian by vector product asymptotic complexity is still $\compl(F + n r^2)$.
\end{proof}
As is noted at the end of Sec.~\ref{sec:riemannian-grad-mat-complexity}, for most practical functions $f(\mX)$ the complexity $F$ of evaluating the function at a single point dominates the added complexity $\compl(n \rank^2)$ of the proposed algorithm, making the total complexity of the algorithm coincide with the complexity of evaluating the function: $\compl(F + n \rank^2) = \compl(F)$.

\begin{statement}
\label{thm:riemannian-hess-complexity}
Let $f: \mathbb{R}^{n_1 \times \ldots \times n_d} \rightarrow \mathbb{R}$ be a smooth function defined by a program $p$, which takes as input TT-cores of the tensor $\tens{X}$ and outputs the value $f(\tens{X})$ in $F$ FLOP, which is polynomial w.r.t. the TT-ranks of the tensor $\tens{X}$ (i.e., the program $p$ belongs to the P complexity class). Then, the complexity of Algorithm~\ref{alg:tensor-riemannian-hessian-by-vector} for computing the product of the approximate Riemannian Hessian by a given vector (for the manifold of tensors of fixed TT-rank) $\mathrm{P}_{{\tens{X}}} \nabla^2 f(\tens{X})~\tens{Z}$ is $\compl(F + d n \rank^3)$, where $n = \max_{k=1, \ldots, d} n_k$, $\rank = \max_{k=1, \ldots, d-1} r_k$.
\end{statement}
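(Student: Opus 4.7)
The plan is to mimic the proof of Proposition~\ref{thm:riemannian-grad-complexity}, exploiting the fact that Algorithm~\ref{alg:tensor-riemannian-hessian-by-vector} is built as \emph{one outer AD pass} applied to the auxiliary scalar function $w$, whose internal evaluation itself involves \emph{one inner AD pass} applied to $g$. By the fundamental complexity preservation property of reverse-mode AD, each AD pass has the same asymptotic cost as one evaluation of the function being differentiated, so the whole analysis reduces to bounding the cost of evaluating $w$ once.

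First I would estimate the cost of the subroutine $w$. Its body calls AD on $g$ to produce the tensors $\mS_k^\delta$; by Proposition~\ref{thm:riemannian-grad-complexity} (or rather the identical analysis used in its proof: running Algorithm~\ref{alg:deltas-to-tangent-space} costs $\compl(d n r^2)$, evaluating $p$ on TT-cores of doubled rank $2\mathbf{r}$ remains $\compl(F)$ because $F$ is polynomial in the rank, and AD preserves this), this inner differentiation is $\compl(F + d n \rank^3)$. The subsequent loop over $k = 1, \ldots, d-1$ performs only reshapes and a gauge projection $\tens{D}_k \mapsto \tens{D}_k + \mU^L_k((\mU^L_k)^\intercal \tens{D}_k)$; the matrix product of an $r_k \times r_{k-1}n_k$ matrix by an $r_{k-1}n_k \times r_k$ matrix costs $\compl(n r^3)$, so the loop contributes $\compl(d n \rank^3)$. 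Finally, evaluating the sum of inner products $\sum_{k=1}^d \langle \mS_k^\delta, \mS_k^{\tens{Z},\delta}\rangle$ on the already-available delta representations costs $\compl(d n r^2)$ by the tangent-space formula~\eqref{eq:tangtensfast}. Summing gives total evaluation cost $\compl(F + d n \rank^3)$ for $w$.

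Second, I would apply the AD complexity preservation property once more to the outer differentiation: computing $\partial w/\partial \mR_k$ at the base point has the same asymptotic cost as one evaluation of $w$, namely $\compl(F + d n \rank^3)$. Everything that remains in Algorithm~\ref{alg:tensor-riemannian-hessian-by-vector} after this outer AD call is a second copy of the gauge-enforcement loop (again $\compl(d n \rank^3)$) plus one final invocation of Algorithm~\ref{alg:deltas-to-tangent-space} (linear in the array sizes, i.e.\ $\compl(d n r^2)$), and the initial orthogonalization of the TT-cores of $\tens{X}$, which is $\compl(d n \rank^3)$ by \cite[end of Sec.~3]{oseledets2011ttMain}. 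Summing across all steps yields the claimed bound $\compl(F + d n \rank^3)$.

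The only delicate point, and the step I would spell out most carefully, is the justification that the outer AD is legitimately $\compl(F + d n \rank^3)$ and not worse. A priori one might worry that nested AD blows up the constants, but reverse-mode AD applied to $w$ traverses the computational graph of $w$ once in reverse, and that graph is the concatenation of (i) the graph implementing the inner AD of $g$ and (ii) the $\compl(d n \rank^3)$ graph for reshapes, gauge projections and inner products. Both have the required polynomial-in-rank structure, so the standard AD theorem~\citep{baydin2015automatic} applies and the final complexity matches the evaluation complexity of $w$, completing the argument.
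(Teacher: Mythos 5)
Your proposal is correct and follows essentially the same route as the paper's own proof: bound the cost of one evaluation of $w$ (inner AD on $g$ at doubled rank $2\mathbf{r}$, gauge-enforcement loop, tangent-space inner product) as $\compl(F + d n \rank^3)$, then invoke the reverse-mode AD complexity-preservation property once more for the outer differentiation and add the remaining $\compl(d n \rank^3)$ bookkeeping steps. Your explicit remark on why the nested (outer-over-inner) AD pass does not degrade the asymptotics is a point the paper leaves implicit, but it is an elaboration of the same argument rather than a different one.
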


\begin{proof}%
Similarly to the first-order case, let us estimate the complexity of each step of Alg.~\ref{alg:tensor-riemannian-hessian-by-vector}.

\textit{Steps 1, 2, 3, 4} define the function $g(\mR_1, \ldots, \mR_d)$ that can be evaluated at a given point in $\compl(F + d n r^3)$ FLOP (equivalently to the first-order case, see proof of Statement~\ref{thm:riemannian-grad-complexity} for details).

\textit{Steps 6-11} use classic automatic differentiation to compute the gradient of the function $g$ with respect to its arguments and then project the resulting gradients onto the gauge conditions. These steps are equivalent to steps 5-10 of Alg.~\ref{alg:tensor-riemannian-autodiff} and can be done in $\compl(F + d n r^3)$ (again, see proof of Statement~\ref{thm:riemannian-grad-complexity} for details).

\textit{Step 12} computes the dot product between two elements of the same tangent space, which (as noted above) can be computed with the complexity that equals to the number of elements in the delta-terms, i.e., $\compl(n d \rank^2)$.
So the total complexity of evaluating the function $w(\widehat{\mR}_1, \ldots, \widehat{\mR}_d)$ at a point is $\compl(F + n d \rank^3)$ FLOP.

\textit{Step 13} uses classic automatic differentiation to compute the gradient of $w(\widehat{\mR}_1, \ldots, \widehat{\mR}_d)$ with respect to its arguments, which can be done in $\compl(F + d n r^3)$ FLOP.

\textit{Steps 14-19} are equivalent to steps 6--11 of Alg.~\ref{alg:tensor-riemannian-autodiff} and (as discussed in the proof of Statement~\ref{thm:riemannian-grad-complexity}) take at most $\compl(d n r^3)$ FLOP.

Combining the complexity from all the steps yields $\compl(F + d n \rank^3)$.
\end{proof}

Similarly to the matrix case, for most practical functions $f(\mX)$, the total complexity of the algorithm is $\compl(F + d n \rank^3) = \compl(F)$.

\section{Numerical experiments}
In this section, we compare three ways of computing Riemannian gradients and approximate Riemannian Hessian-by-vector products: `naive' -- by deriving the expression for the TT-format of the Euclidean gradient and then projecting the Euclidean gradient onto the tangent space\footnote{Note that in this process we never materialize the dense representation of any tensor and always work with TT-representations.}; `improved' -- similar to the `naive' approach, but with additional tricks to speed up the computations using optimized primitives\footnote{Examples of additional tricks used: implementing projection of matrix-by-vector multiplication $P_{\tens{x}} \mathrm{A} \tens{b}$ as a single operation, instead of a doing them one-by-one allows to speed things up; using the fact that projection is a linear operation and thus $P_{\tens{X}} \sum_i \tens{A}_i = \sum_i P_{\tens{X}} \tens{A}_i$.} implemented in~\cite{novikov2018t3f}; `AD' -- by using the proposed automatic differentiation method. All methods give the same answer (as verified by tests for all the functions described below), so we only consider speed and memory usage when comparing the methods.

We ran the experiments on a machine with 240 Gb of RAM and an NVIDIA V100 GPU which has 16 Gb of video memory available.
For each problem, we tried to choose a realistic problem size (specified separately for each particular function below) and ran all the experiments with three different tiers of TT-ranks: Small, Medium and Large. We choose the Large ranks for each problem to be the largest TT-rank that fits RAM of the machine we ran the experiments on (240 Gb), Medium to be the largest TT-rank that fits the GPU memory (16 Gb), and Small TT-ranks to be twice smaller than the Medium TT-ranks. See Table~\ref{tbl:experiment-ranks} for the TT-ranks for each function.

\subsection{Functions} \label{sec:exp-functionals}
Below we talk in detail about the five functions considered in numerical experiments.

\paragraph{Quadratic form}
The first function we consider is quadratic form $f(\mX) = \left<\mathrm{A}\mX, \mX\right>$ with symmetric $\mathrm{A}$, which is relevant for solving systems of linear equations.
 Its Euclidean gradient equals $2 \mathrm{A}\mX$, and the product of its Hessian by a given vector $\tens{Z}$ equals $2 \mathrm{A}\tens{Z}$. 
`Naive' method for computing the projection of the matrix-by-vector product (e.g., the product of the approximate Riemannian Hessian by a given vector $\mathrm{P}_{\tens{X}} 2\mathrm{A}\tens{Z}$) consist in first computing the matrix-by-vector product $\mathrm{A}\tens{Z}$ and then projecting the result. The combined complexity of the `naive' approach is $\compl(d n r_x r_z^2 R^2)$, where the TT-rank of the tensor $\tens{X}$ is $\mathbf{r}_x=(r_x,r_x,\dots,r_x)$, TT-rank of tensor $\tens{Z}$ is $\mathbf{r}_z=(r_z,r_z,\dots,r_z)$, and TT-rank of the operator $\mathrm{A}$ is $\mathbf{R}=(R,R,\dots,R)$. An `improved' version of this operation combines the matrix-by-vector multiplication and the projection onto the tangent space into a single step  $\mathrm{P}_{\tens{X}}  \mathrm{A}\tens{Z}$ and exploits the structure of arising operations to decrease complexity to $\compl(d n^2 r_x r_z R^2)$ (for details of implementation of this operation see section 4.1 of~\cite{rakhuba2019low}).

In the experiments below, we consider a 40-dimensional tensor $\tens{X} \in \mathbb{R}^{20 \times \ldots \times 20}$ and represent the operator $\mathrm{A}$ by a TT-matrix of size $20^{40} \times 20^{40}$. We use TT-ranks $r_{\mathrm{A}} = 10$, $r_{\tens{X}} = 10$, $r_{\tens{Z}} = 20$ for the Small TT-rank experiment and $r_{\mathrm{A}} = 20$, $r_{\tens{X}} = 20$, $r_{\tens{Z}} = 40$ for the Medium and Large TT-rank experiments.

\paragraph{Quadratic form with a Gram matrix}
The second function is quadratic form $f(\mX) = \left<\mathrm{A}^\intercal\mathrm{A}\mX, \mX\right>$ (operator factored into the product of two TT-matrices $\mathrm{A}^\intercal\mathrm{A}$ arised, e.g., in~\cite{bachmayr2020stability}). 
The Euclidean gradient equals to $2\mathrm{A}^\intercal\mathrm{A}\mX$ and the product of its Hessian by a given vector $\tens{Z}$ equals to $2\mathrm{A}^\intercal\mathrm{A}\tens{Z}$. 
We use the same trick to optimize the projection of the product of two matrices by a vector as in the quadratic form case. Note that it takes significant effort to derive and implement the `improved' version here.

In the experiments below, we consider a 10-dimensional tensor $\tens{X} \in \mathbb{R}^{20 \times \ldots \times 20}$ and represent the operator $\mathrm{A}$ by a TT-matrix of size $20^{10} \times 20^{10}$. We use TT-ranks $r_{\mathrm{A}} = 10$, $r_{\tens{X}} = 5$, $r_{\tens{Z}} = 10$ for the Small TT-rank experiment, $r_{\mathrm{A}} = 20$, $r_{\tens{X}} = 10$, $r_{\tens{Z}} = 20$ for the Medium TT-rank experiments, and $r_{\mathrm{A}} = 20$, $r_{\tens{X}} = 20$, $r_{\tens{Z}} = 40$ for the Large TT-rank experiments.

\paragraph{Rayleigh quotient}
The Rayleigh quotient $f(\mX) = {\left<\mathrm{A} [\mX], \mX\right>}/{\left<\mX, \mX\right>}$ with symmetric $\mathrm{A}$ is relevant for solving eigenvalue problems. 
The Euclidean gradient is $\frac{2}{\langle \mX, \mX \rangle} (\mathrm{A} [\mX] - {f(\mX)}\mX)$, and the product of its Hessian by a given vector $\tens{Z}$ is 
\begin{equation*}
\begin{aligned}
\nabla^2 f(\tens{X}) ~\tens{Z} =& \frac{2}{\langle \mX, \mX \rangle} \mathrm{A} \tens{Z} - 2 \frac{f(\mX)}{\langle \mX, \mX \rangle} \tens{Z} - 4 \frac{\langle \mathrm{A} \mX, \tens{Z} \rangle}{\langle \mX, \mX \rangle^2}\mX \\
&- 4 \frac{\langle \mX, \tens{Z} \rangle}{\langle \mX, \mX \rangle^2}\mathrm{A} \mX + 8f(\mX)\frac{\langle \mX, \tens{Z} \rangle}{\langle \mX, \mX \rangle^2} \mX
\end{aligned}
\end{equation*}

The `improved' version of the Riemannian gradient and approximate-Riemannian-Hessian-by-vector product is computed by representing the projection of a sum of terms as a sum of projections and using the optimized projection of matrix-by-vector multiplication where appropriate, e.g., for the Riemannian gradient we get
\[
\mathrm{P}_\mX\, \nabla f = \frac{2}{\langle \mX, \mX \rangle} \mathrm{P}_\mX\, \mathrm{A} \mX - \frac{2f(\mX)}{\langle \mX, \mX \rangle} \mX,
\]
where we use the fact that $\mathrm{P}_\mX\, \mX = \mX$.

In the experiments below we consider a 40-dimensional tensor $\tens{X} \in \mathbb{R}^{20 \times \ldots \times 20}$ and represent the operator $\mathrm{A}$ by a TT-matrix of size $20^{40} \times 20^{40}$. We use TT-ranks $r_{\mathrm{A}} = 10$, $r_{\tens{X}} = 10$, $r_{\tens{Z}} = 20$ for the Small TT-rank experiment and $r_{\mathrm{A}} = 20$, $r_{\tens{X}} = 20$, $r_{\tens{Z}} = 40$ for the Medium and Large TT-rank experiments.

\begin{table}[t]
\centering
\begin{tabular}{lcc cc cc}
\toprule
          Function &  \multicolumn{2}{c}{Small} & \multicolumn{2}{c}{Medium} & \multicolumn{2}{c}{Large} \\
          \cmidrule(lr){2-3}\cmidrule(lr){4-5}\cmidrule(r){6-7}
           &  tensor $\tens{X}$ & operator $\mathrm{A}$ & tensor $\tens{X}$ & operator $\mathrm{A}$ & tensor $\tens{X}$ & operator $\mathrm{A}$ \\
\midrule
              $\left< \mathrm{A} \mX, \mX \right>$ &       10 &       10 &          20 &          20 &         20 &         20 \\
             $\left< \mathrm{A}^\intercal \mathrm{A} \mX , \mX \right>$ &       10 &        5 &          20 &          10 &         20 &         20 \\
 RayleighQuotient &       10 &       10 &          20 &          20 &         20 &         20 \\
       completion &        5 &        - &          10 &           - &         20 &          - \\
      ExpMachines &        5 &        - &          10 &           - &         20 &          - \\
\bottomrule
\end{tabular}
\caption{Ranks of tensors, matrices and vectors involved in different tiers of experiments. See Sec.~\ref{sec:exp-functionals} for more details. \label{tbl:experiment-ranks}}
\end{table}

\paragraph{Completion problem}
The following function is used when solving low-rank matrix and tensor completion problems: $f(\mX) = \|\mathrm{P}_\Omega (\mX - \ma)\|^2$ where $\mathrm{P}_\Omega$ denotes projection on the index set $\Omega$ such that
\[
  \mathrm{P}_\Omega \mX = 
  \begin{cases}
    X_{i_1\dotsi_d} \quad &(i_1,\dots,i_d) \in \Omega,\\
    0 & \text{otherwise}.
  \end{cases}
\]
Its Euclidean gradient is $2 \mathrm{P}_\Omega (\mX - \ma)$, and the product of its Euclidean Hessian by a given vector $\tens{Z}$ is $\mathrm{P}_\Omega \tens{Z}$. For the `improved' implementation, we represent the projection of a tensor on the index set as the sum of its non-zero entries $\mathrm{P}_\Omega \mX = \sum_{(i_1,\dots,i_d) \in \Omega} X_{i_1 \dots i_d} \tens{E}^{i_1\dots i_d}$ where by~$\tens{E}^{i_1 \dots i_d}$ we denote the tensor with value~1 in the position $(i_1,\dots,i_d)$ and zero everywhere else. Tensor $\tens{E}^{i_1 \dots i_d}$ has TT-rank~1. 
Then, we use the fact that the projection of a sum of terms is the sum of projections and, thus, instead of projecting the tensor $\mathrm{P}_\Omega \mX$ which has high TT-rank, we project TT-rank-1 tensors $\tens{E}$, which leads to a significant speed-up.

\begin{table}[t]
\centering
\begin{subtable}[h]{\textwidth}
\centering
\begin{tabular}{lcc cc cc}
\toprule
          Function &  \multicolumn{2}{c}{Naive} & \multicolumn{2}{c}{Improved} & \multicolumn{2}{c}{AD} \\
          \cmidrule(lr){2-3}\cmidrule(lr){4-5}\cmidrule(r){6-7}
           &  (s) & (Gb) &  (s) & (Gb) &  (s) & (Gb) \\
\midrule
                      $\left< \mathrm{A} \mX , \mX \right>$ &        2.4 &          6.4 &        3.3 &          2.8 &      \textbf{1} &  \textbf{0.62} \\
 $\left< \mathrm{A}^\intercal \mathrm{A} \mX , \mX \right>$ &          2 &           10 &          - &            - &   \textbf{0.54} &   \textbf{0.3} \\
                                           RayleighQuotient &        3.1 &          6.9 &        3.4 &          2.8 &    \textbf{1.1} &  \textbf{0.62} \\
                                                 completion &          - &            - &        3.4 &           13 &   \textbf{0.98} &   \textbf{6.2} \\
                                                ExpMachines &       0.18 &        0.082 &       0.12 &        0.042 &  \textbf{0.078} &  \textbf{0.03} \\
\bottomrule
\end{tabular}
   \caption{Comparison of computing Riemannian gradient by three methods on CPU for Medium TT-ranks. \label{tbl:riemannian-autodiff-grad-cpu-medium}}
\end{subtable}
~\\[0.3cm]
\begin{subtable}[h]{\textwidth}
\centering
\begin{tabular}{lcc cc cc}
\toprule
          Function &  \multicolumn{2}{c}{Naive} & \multicolumn{2}{c}{Improved} & \multicolumn{2}{c}{AD} \\
          \cmidrule(lr){2-3}\cmidrule(lr){4-5}\cmidrule(r){6-7}
           &  (s) & (Gb) &  (s) & (Gb) &  (s) & (Gb) \\
\midrule
                      $\left< \mathrm{A} \mX , \mX \right>$ &       0.17 &           11 &  \textbf{0.057} &  \textbf{0.56} &           0.085 &            0.62 \\
 $\left< \mathrm{A}^\intercal \mathrm{A} \mX , \mX \right>$ &          - &            - &               - &              - &  \textbf{0.032} &   \textbf{0.28} \\
                                           RayleighQuotient &       0.22 &           12 &   \textbf{0.07} &  \textbf{0.57} &             0.1 &            0.63 \\
                                                 completion &          - &            - &               - &              - &   \textbf{0.25} &    \textbf{5.6} \\
                                                ExpMachines &      0.034 &         0.11 &  \textbf{0.027} &           0.04 &  \textbf{0.027} &  \textbf{0.017} \\
\bottomrule
\end{tabular}
   \caption{Comparison of computing Riemannian gradient by three methods on GPU for Medium TT-ranks. \label{tbl:riemannian-autodiff-grad-gpu-medium}}
\end{subtable}
\caption{Comparison of three methods for Medium TT-rank setting (see Table~\ref{tbl:experiment-ranks}) for computing  the Riemannian gradient of various functions in terms of execution time and memory used on CPU and GPU. A dash means that the respective method ran out of memory. \label{tbl:riemannian-autodiff-grad-medium}}
\end{table}

In the experiments below we consider a 10-dimensional tensor $\tens{X} \in \mathbb{R}^{20 \times \ldots \times 20}$ and the index set $\Omega$ consisting of $10 d n r_{\mathrm{X}}^2$ elements (i.e., the TT-rank of $\mathrm{P}_\Omega \mX$ equals to $10 d n r_{\mathrm{X}}^2$ and does not fit to memory for the `naive' implementation even in the Small TT-rank case). The values of the target tensor~$\tens{A}$ at the randomly chosen $10 d n r_{\mathrm{X}}^2$ elements are sampled from the standard normal distribution. For the Small TT-rank experiment, we use $r_{\mathrm{X}} = 5$, $r_{\tens{Z}} = 10$ and $10 d n r_{\mathrm{X}}^2 = 50,000$ observed elements; for Medium TT-rank experiment, we use $r_{\mathrm{X}} = 10$, $r_{\tens{Z}} = 20$ and $10 d n r_{\mathrm{X}}^2 = 200,000$ observed elements; for Large TT-rank experiment, we use $r_{\mathrm{X}} = 20$, $r_{\tens{Z}} = 40$ and $10 d n r_{\mathrm{X}}^2 = 800,000$ observed elements;. 

\paragraph{Exponential machines}
For a machine learning related function, we used the empirical risk of the exponential machines model (see~\cite{novikov18exponential} for details and justification): 
\[f(\mX) = \sum_{i=1}^N h(\langle \mX, \tens{W}^{(i)} \rangle, y^{(i)}),\] 
where $h(x, y)$ is the loss function $h(x, y) = \log(1 + \exp(-yx))$\footnote{This loss adapted from~\cite{novikov18exponential} is equivalent to the cross-entropy loss when the label $y$ takes values from $ \{-1, 1\}$ instead of the more common $ \{0, 1\}$.}, tensors $\tens{W}^{(i)}$ have TT-rank 1, and $y^{(i)}$ are binary numbers.

As argued in~\cite{novikov18exponential}, this model corresponds to a type of recurrent neural network, so we refer to this example as a neural network loss in the rest of the paper.

The gradient of this function is
\[
\nabla f = -\sum_{i=1}^N \frac{\exp(-y^{(i)}\langle \mX, \tens{W}^{(i)} \rangle)}{1 + \exp(-y^{(i)}\langle \mX, \tens{W}^{(i)} \rangle)} \tens{W}^{(i)}
\] and the product of the Hessian of this function by a given vector is 
\[
\nabla^2 f(\tens{X})\tens{Z} = \sum_{i=1}^N \frac{\exp(-y^{(i)}\langle \mX, \tens{W}^{(i)} \rangle)}{(1 + \exp(-y^{(i)}\langle \mX, \tens{W}^{(i)} \rangle)^2}\langle \tens{Z}, \tens{W}^{(i)} \rangle \tens{W}^{(i)}.
\]
Again, by using linearity of the projection, to implement the `improved' version we can independently compute the cheap projections $\mathrm{P}_\mX\, \tens{W}^{(i)}$ and sum them up.

In the experiments below we consider a 10-dimensional tensor $\tens{X} \in \mathbb{R}^{500 \times \ldots \times 500}$ (which corresponds to a machine learning problem with 10 categorical features each of which can take $500$ different values) and number of objects (in the minibatch) $N=32$. We use TT-ranks $r_{\mathrm{X}} = 5$, $r_{\tens{Z}} = 10$ for the Small TT-rank experiment,  $r_{\mathrm{X}} = 10$, $r_{\tens{Z}} = 20$ for the Medium TT-rank experiments, and $r_{\mathrm{X}} = 20$, $r_{\tens{Z}} = 40$ for the Large TT-rank experiments.

\subsection{Results}
We used the T3F library~\citep{novikov2018t3f} for implementing all three algorithms for the five functions described above. The T3F library provides the primitives used above such as the optimized projection of a matrix by vector product and also supports GPU execution (thanks to the underlying use of TensorFlow library~\citep{tensorflow2015-whitepaper}). We implemented the Riemannian automatic differentiation functionality as a part of the T3F library as well.

\begin{table}[t]
\centering
\begin{subtable}[h]{\textwidth}
\centering
\begin{tabular}{lcc cc cc}
\toprule
          Function &  \multicolumn{2}{c}{Naive} & \multicolumn{2}{c}{Improved} & \multicolumn{2}{c}{AD} \\
          \cmidrule(lr){2-3}\cmidrule(lr){4-5}\cmidrule(r){6-7}
           &  (s) & (Gb) &  (s) & (Gb) &  (s) & (Gb) \\
\midrule
                      $\left< \mathrm{A} \mX , \mX \right>$ &        6.9 &           28 &            3.6 &             3.2 &  \textbf{2.2} &   \textbf{1.1} \\
 $\left< \mathrm{A}^\intercal \mathrm{A} \mX , \mX \right>$ &          5 &           36 &              - &               - &  \textbf{1.1} &  \textbf{0.49} \\
                                           RayleighQuotient &         18 &           56 &            4.9 &               4 &  \textbf{2.4} &   \textbf{1.1} \\
                                                 completion &          - &            - &            3.5 &              22 &  \textbf{2.6} &    \textbf{12} \\
                                                ExpMachines &       0.21 &        0.075 &  \textbf{0.12} &  \textbf{0.053} &          0.13 &          0.079 \\
\bottomrule
\end{tabular}
   \caption{Comparison of computing the approximate Riemannian Hessian by vector product by three methods on CPU for Medium TT-ranks. \label{tbl:riemannian-autodiff-hessian-by-vector-cpu-medium}}
\end{subtable}
~\\[0.3cm]
\begin{subtable}[h]{\textwidth}
\centering
\begin{tabular}{lcc cc cc}
\toprule
          Function &  \multicolumn{2}{c}{Naive} & \multicolumn{2}{c}{Improved} & \multicolumn{2}{c}{AD} \\
          \cmidrule(lr){2-3}\cmidrule(lr){4-5}\cmidrule(r){6-7}
           &  (s) & (Gb) &  (s) & (Gb) &  (s) & (Gb) \\
\midrule
                      $\left< \mathrm{A} \mX , \mX \right>$ &          - &            - &  \textbf{0.067} &  \textbf{0.66} &           0.16 &              1 \\
 $\left< \mathrm{A}^\intercal \mathrm{A} \mX , \mX \right>$ &          - &            - &               - &              - &  \textbf{0.06} &  \textbf{0.54} \\
                                           RayleighQuotient &          - &            - &   \textbf{0.14} &  \textbf{0.73} &           0.19 &            1.1 \\
                                                 completion &          - &            - &               - &              - &  \textbf{0.64} &    \textbf{11} \\
                                                ExpMachines &      0.036 &         0.11 &  \textbf{0.028} &  \textbf{0.04} &           0.03 &          0.043 \\
\bottomrule
\end{tabular}
   \caption{Comparison of computing the approximate Riemannian Hessian by vector product by three methods on GPU for Medium TT-ranks. \label{tbl:riemannian-autodiff-hessian-by-vector-gpu-medium}}
\end{subtable}
\caption{Comparison of three methods for Medium TT-rank setting (see Table~\ref{tbl:experiment-ranks}) for computing the approximate Riemannian Hessian by vector product of various functions in terms of execution time and memory used on CPU and GPU. A dash means that the respective method ran out of memory. \label{tbl:riemannian-autodiff-hessian-by-vector-medium}}
\end{table}

Results of the main numerical experiments are presented in Table~\ref{tbl:riemannian-autodiff-hessian-by-vector-medium}, plus additional results on Small and Large ranks are presented in Appendix. The proposed automatic differentiation method outperformed both the `naive' and the `improved' implementations for computing the Riemannian gradient on CPU both in terms of runtime and memory usage (Tables~\ref{tbl:riemannian-autodiff-grad-cpu-small},\ref{tbl:riemannian-autodiff-grad-cpu-medium} and~\ref{tbl:riemannian-autodiff-grad-cpu-large}).

Note that sometimes the `improved' implementation runs slower than the `naive' implementation. After profiling our implementation of the methods we believe that this happens because the `improved' implementation operates with tensors of larger dimensionality (e.g., when the `naive' version operates with a tensor of size $1024 \times 1024 \times 1024$, the `improved' implementation may operate with the same tensor, but reshaped to $32 \times 32 \times 32 \times 32 \times 32 \times 32$ for better flexibility), which causes an additional overhead when permuting dimensions. This overhead is typically neglectable when executing on GPU, because GPUs have the required flops to compute all the necessary permute operations in parallel.

In some cases, the proposed automatic differentiation method is outperformed in terms of the runtime by the `improved' implementation. This happens due to the (constant) overhead that arises when performing automatic differentiation. For example, when computing the approximate Riemannian Hessian-by-vector product of the quadratic form, the `improved' implementation can directly compute the desired quantity $\mathrm{P}_{\tens{X}} \mathrm{A}\tens{Z}$, while the automatic differentiation is forced to first compute the function $\left<\mathrm{A}\mX, \mX\right>$, then perform the classic automatic differentiation twice, each time doubling the computational graph, making the computational graph four times larger than the original one.

However, we believe that despite some overheads compared to the `improved' implementation which appears in individual cases, the proposed method is still valuable since it significantly simplifies the implementation of Riemannian optimization algorithms while getting reasonable (and in many cases superior) performance.

\section{Conclusion}
In this paper, we propose a way of exactly 	computing the Riemannian gradient and the approximate Riemannian Hessian-by-vector product of a function for low-rank matrices and tensors in time proportional to the time it takes to compute the value of the function at one point.
In experiments, the proposed approach in many cases shows superior performance compared to both considered baselines in terms of memory and time, while being significantly easier to use.
The code of the proposed algorithms is published online in the open-source library T3F.

\section*{Acknowledgements}
This work was supported by the Ministry of Science and Higher Education of the Russian Federation (Grant \#075-15-2020-801).

\bibliography{autodiff,tensor,our}

\begin{thebibliography}{10}

\bibitem{holtz-manifolds-fixed-rank-2011}
S.~Holtz, T.~Rohwedder, and R.~Schneider.
\newblock On manifolds of tensors of fixed {TT}--rank.
\newblock {\em Numer. Math.}, 120(4):701--731, 2012.

\bibitem{absil}
P.-A. Absil, R.~Mahony, and R.~Sepulchre.
\newblock {\em Optimization Algorithms on Matrix Manifolds}.
\newblock Princeton University Press, Princeton, NJ, 2008.

\bibitem{absil2013extrinsic}
P-A Absil, Robert Mahony, and Jochen Trumpf.
\newblock An extrinsic look at the {Riemannian Hessian}.
\newblock In {\em International Conference on Geometric Science of
  Information}, pages 361--368. Springer, 2013.

\bibitem{kressner2016preconditioned}
Daniel Kressner, Michael Steinlechner, and Bart Vandereycken.
\newblock Preconditioned low-rank {Riemannian} optimization for linear systems
  with tensor product structure.
\newblock {\em SIAM Journal on Scientific Computing}, 38(4):A2018--A2044, 2016.

\bibitem{vandereycken2010riemannian}
Bart Vandereycken and Stefan Vandewalle.
\newblock A {Riemannian} optimization approach for computing low-rank solutions
  of lyapunov equations.
\newblock {\em SIAM J. Matrix Anal. A.}, 31(5):2553--2579, 2010.

\bibitem{ro-jd-2018}
Maxim Rakhuba and Ivan Oseledets.
\newblock {Jacobi}-{Davidson} method on low-rank matrix manifolds.
\newblock {\em SIAM J. Sci. Comput.}, 40(2):A1149--A1170, 2018.

\bibitem{tensorflow2015-whitepaper}
Mart\'{\i}n Abadi, Ashish Agarwal, Paul Barham, Eugene Brevdo, Zhifeng Chen,
  Craig Citro, Greg~S. Corrado, Andy Davis, Jeffrey Dean, Matthieu Devin,
  Sanjay Ghemawat, Ian Goodfellow, Andrew Harp, Geoffrey Irving, Michael Isard,
  Yangqing Jia, Rafal Jozefowicz, Lukasz Kaiser, Manjunath Kudlur, Josh
  Levenberg, Dandelion Man\'{e}, Rajat Monga, Sherry Moore, Derek Murray, Chris
  Olah, Mike Schuster, Jonathon Shlens, Benoit Steiner, Ilya Sutskever, Kunal
  Talwar, Paul Tucker, Vincent Vanhoucke, Vijay Vasudevan, Fernanda Vi\'{e}gas,
  Oriol Vinyals, Pete Warden, Martin Wattenberg, Martin Wicke, Yuan Yu, and
  Xiaoqiang Zheng.
\newblock {TensorFlow}: Large-scale machine learning on heterogeneous systems,
  2015.
\newblock Software available from tensorflow.org.

\bibitem{kossaifi2016tensorly}
Jean Kossaifi, Yannis Panagakis, Anima Anandkumar, and Maja Pantic.
\newblock Tensorly: Tensor learning in python.
\newblock {\em arXiv preprint arXiv:1610.09555}, 2016.

\bibitem{tt-toolbox}
I.~V. Oseledets, S.~Dolgov, V.~Kazeev, D.~Savostyanov, O.~Lebedeva,
  P.~Zhlobich, T.~Mach, and L.~Song.
\newblock {TT-Toolbox}, 2011.
\newblock https://github.com/oseledets/TT-Toolbox.

\bibitem{ma2020autohoot}
Linjian Ma, Jiayu Ye, and Edgar Solomonik.
\newblock Autohoot: Automatic high-order optimization for tensors.
\newblock In {\em Proceedings of the ACM International Conference on Parallel
  Architectures and Compilation Techniques}, pages 125--137, 2020.

\bibitem{suess2017mpnum}
Daniel Suess and Milan Holz{\"a}pfel.
\newblock mpnum: A matrix product representation library for python.
\newblock {\em Journal of Open Source Software}, 2(20):465, 2017.

\bibitem{townsend2016pymanopt}
James Townsend, Niklas Koep, and Sebastian Weichwald.
\newblock Pymanopt: A python toolbox for optimization on manifolds using
  automatic differentiation.
\newblock {\em The Journal of Machine Learning Research}, 17(1):4755--4759,
  2016.

\bibitem{sommer2016automatic}
Hannes Sommer, C{\'e}dric Pradalier, and Paul Furgale.
\newblock Automatic differentiation on differentiable manifolds as a tool for
  robotics.
\newblock In {\em Robotics Research}, pages 505--520. Springer, 2016.

\bibitem{koppel2018manifold}
Leonid Koppel and Steven~L Waslander.
\newblock Manifold geometry with fast automatic derivatives and coordinate
  frame semantics checking in c++.
\newblock {\em arXiv preprint arXiv:1805.01810}, 2018.

\bibitem{psenka2020second}
Michael Psenka and Nicolas Boumal.
\newblock Second-order optimization for tensors with fixed tensor-train rank.
\newblock {\em arXiv preprint arXiv:2011.13395}, 2020.

\bibitem{gander2014scientific}
Walter Gander, Martin~J Gander, and Felix Kwok.
\newblock {\em Scientific computing-An introduction using Maple and MATLAB},
  volume~11.
\newblock Springer Science \& Business, 2014.

\bibitem{margossian2019review}
Charles~C Margossian.
\newblock A review of automatic differentiation and its efficient
  implementation.
\newblock {\em Wiley Interdisciplinary Reviews: Data Mining and Knowledge
  Discovery}, 9(4):e1305, 2019.

\bibitem{griewank2008evaluating}
Andreas Griewank and Andrea Walther.
\newblock {\em Evaluating derivatives: principles and techniques of algorithmic
  differentiation}.
\newblock SIAM, 2008.

\bibitem{ao-retract-2014}
P.~A. Absil and I.~V. Oseledets.
\newblock Low-rank retractions: a survey and new results.
\newblock {\em Comput. Optim. Appl.}, 2014.

\bibitem{rakhuba2018jacobi}
MV~Rakhuba and Ivan~V Oseledets.
\newblock Jacobi--davidson method on low-rank matrix manifolds.
\newblock {\em SIAM Journal on Scientific Computing}, 40(2):A1149--A1170, 2018.

\bibitem{lee2003introduction}
J.~M Lee.
\newblock Introduction to smooth manifolds, graduate texts in matematical 218,
  ed, 2003.

\bibitem{vandereycken2013low}
Bart Vandereycken.
\newblock Low-rank matrix completion by {Riemannian} optimization.
\newblock {\em SIAM Journal on Optimization}, 23(2):1214--1236, 2013.

\bibitem{oseledets2011ttMain}
I.~V. Oseledets.
\newblock {T}ensor-{T}rain decomposition.
\newblock {\em SIAM J. Scientific Computing}, 33(5):2295--2317, 2011.

\bibitem{steinlechner2016riemannian}
M.~Steinlechner.
\newblock {Riemannian} optimization for solving high-dimensional problems with
  low-rank tensor structure.
\newblock 2016.

\bibitem{oliphant2006guide}
Travis~E Oliphant.
\newblock {\em A guide to NumPy}, volume~1.
\newblock Trelgol Publishing USA, 2006.

\bibitem{baydin2015automatic}
Atilim~Gunes Baydin, Barak~A Pearlmutter, Alexey~Andreyevich Radul, and
  Jeffrey~Mark Siskind.
\newblock Automatic differentiation in machine learning: a survey.
\newblock 2018.

\bibitem{khor-low-rank-kron-P1-2006}
W.~Hackbusch and B.~N. Khoromskij.
\newblock Low-rank {Kronecker-product} approximation to multi-dimensional
  nonlocal operators. {I}. {Separable} approximation of multi-variate
  functions.
\newblock {\em Computing}, 76(3-4):177--202, 2006.

\bibitem{khor-prec-2009}
B.~N. Khoromskij.
\newblock Tensor-structured preconditioners and approximate inverse of elliptic
  operators in {$\mathbb{R}^d$}.
\newblock {\em Constr. Approx.}, 30:599--620, 2009.

\bibitem{pearlmutter1994fast}
Barak~A Pearlmutter.
\newblock Fast exact multiplication by the hessian.
\newblock {\em Neural computation}, 6(1):147--160, 1994.

\bibitem{novikov2018t3f}
Alexander Novikov, Pavel Izmailov, Valentin Khrulkov, Michael Figurnov, and
  Ivan Oseledets.
\newblock Tensor train decomposition on tensorflow (t3f).
\newblock {\em Journal of Machine Learning Research}, 21, 2020.

\bibitem{rakhuba2019low}
Maxim Rakhuba, Alexander Novikov, and Ivan Oseledets.
\newblock Low-rank {Riemannian} eigensolver for high-dimensional
  {Hamiltonians}.
\newblock {\em Journal of Computational Physics}, 396:718--737, 2019.

\bibitem{bachmayr2020stability}
Markus Bachmayr and Vladimir Kazeev.
\newblock Stability of low-rank tensor representations and structured
  multilevel preconditioning for elliptic {PDEs}.
\newblock {\em Found. Comput. Math.}, pages 1--62, 2020.

\bibitem{novikov18exponential}
A.~Novikov, M.~Trofimov, and I.~Oseledets.
\newblock Exponential machines.
\newblock {\em Bulletin of the Polish Academy of Sciences: Technical Sciences},
  6, 2018.

\end{thebibliography}

\newpage
\appendix
\section{Additional experimental results}

Here we provide additional experimental results. If in the main text, only the Medium TT-rank experiments were provided, here we also provide results on input tensors of Small and Large TT-ranks (see Sec.~\ref{sec:exp-functionals} for a detailed explanation of the setup and of the TT-ranks chosen for all experiments).

\begin{table}[h!]
\centering
\begin{subtable}[h]{\textwidth}
\centering
\begin{tabular}{lcc cc cc}
\toprule
          Function &  \multicolumn{2}{c}{Naive} & \multicolumn{2}{c}{Improved} & \multicolumn{2}{c}{AD} \\
          \cmidrule(lr){2-3}\cmidrule(lr){4-5}\cmidrule(r){6-7}
           &  (s) & (Gb) &  (s) & (Gb) &  (s) & (Gb) \\
\midrule
                      $\left< \mathrm{A} \mX , \mX \right>$ &       0.32 &         0.42 &       0.43 &         0.26 &    \textbf{0.2} &     \textbf{0.1} \\
 $\left< \mathrm{A}^\intercal \mathrm{A} \mX , \mX \right>$ &      0.098 &         0.15 &          - &            - &  \textbf{0.085} &   \textbf{0.033} \\
                                           RayleighQuotient &       0.52 &         0.48 &       0.49 &         0.22 &   \textbf{0.23} &    \textbf{0.11} \\
                                                 completion &          - &            - &       0.46 &         0.88 &  \textbf{0.074} &    \textbf{0.41} \\
                                                ExpMachines &       0.14 &        0.061 &      0.034 &        0.013 &  \textbf{0.026} &  \textbf{0.0097} \\
\bottomrule
\end{tabular}
   \caption{Comparison of computing Riemannian gradient by three methods on CPU for Small TT-ranks. \label{tbl:riemannian-autodiff-grad-cpu-small}}
\end{subtable}
~\\[0.3cm]
\begin{subtable}[h]{\textwidth}
\centering
\begin{tabular}{lcc cc cc}
\toprule
          Function &  \multicolumn{2}{c}{Naive} & \multicolumn{2}{c}{Improved} & \multicolumn{2}{c}{AD} \\
\cmidrule(lr){2-3}\cmidrule(lr){4-5}\cmidrule(r){6-7}
           &  (s) & (Gb) &  (s) & (Gb) &  (s) & (Gb) \\
\midrule
                      $\left< \mathrm{A} \mX , \mX \right>$ &   \textbf{0.029} &         0.69 &  \textbf{0.029} &         0.14 &           0.036 &     \textbf{0.1} \\
 $\left< \mathrm{A}^\intercal \mathrm{A} \mX , \mX \right>$ &  \textbf{0.0083} &         0.23 &               - &            - &            0.01 &   \textbf{0.031} \\
                                           RayleighQuotient &   \textbf{0.038} &         0.76 &           0.039 &         0.14 &           0.042 &    \textbf{0.11} \\
                                                 completion &                - &            - &           0.092 &          1.2 &  \textbf{0.062} &    \textbf{0.37} \\
                                                ExpMachines &            0.031 &          0.1 &  \textbf{0.011} &        0.013 &  \textbf{0.011} &  \textbf{0.0048} \\
\bottomrule
\end{tabular}
   \caption{Comparison of computing Riemannian gradient by three methods on GPU for Small TT-ranks. \label{tbl:riemannian-autodiff-grad-gpu-small}}
\end{subtable}
\caption{Comparison of three methods for Small TT-rank setting (see Table~\ref{tbl:experiment-ranks}) for computing the Riemannian gradient of various functions in terms of execution time and memory used on CPU and GPU. A dash means that the respective method ran out of memory. \label{tbl:riemannian-autodiff-grad-small}}
\end{table}

\begin{table}[h!]
\centering
\begin{subtable}[h]{\textwidth}
\centering
\begin{tabular}{lcc cc cc}
\toprule
          Function &  \multicolumn{2}{c}{Naive} & \multicolumn{2}{c}{Improved} & \multicolumn{2}{c}{AD} \\
          \cmidrule(lr){2-3}\cmidrule(lr){4-5}\cmidrule(r){6-7}
           &  (s) & (Gb) &  (s) & (Gb) &  (s) & (Gb) \\
\midrule
                      $\left< \mathrm{A} \mX , \mX \right>$ &        2.4 &          6.4 &        3.3 &          2.8 &     \textbf{1} &  \textbf{0.62} \\
 $\left< \mathrm{A}^\intercal \mathrm{A} \mX , \mX \right>$ &         17 &          162 &          - &            - &   \textbf{1.9} &   \textbf{1.1} \\
                                           RayleighQuotient &        3.1 &          6.9 &        3.4 &          2.8 &   \textbf{1.1} &  \textbf{0.62} \\
                                                 completion &          - &            - &          - &            - &    \textbf{13} &    \textbf{98} \\
                                                ExpMachines &       0.35 &         0.12 &          1 &         0.15 &  \textbf{0.33} &  \textbf{0.11} \\
\bottomrule
\end{tabular}
   \caption{Comparison of computing Riemannian gradient by three methods on CPU for Large TT-ranks. \label{tbl:riemannian-autodiff-grad-cpu-large}}
\end{subtable}
~\\[0.3cm]
\begin{subtable}[h]{\textwidth}
\centering
\begin{tabular}{lcc cc cc}
\toprule
          Function &  \multicolumn{2}{c}{Naive} & \multicolumn{2}{c}{Improved} & \multicolumn{2}{c}{AD} \\
          \cmidrule(lr){2-3}\cmidrule(lr){4-5}\cmidrule(r){6-7}
           &  (s) & (Gb) &  (s) & (Gb) &  (s) & (Gb) \\
\midrule
                      $\left< \mathrm{A} \mX , \mX \right>$ &           0.17 &           11 &  \textbf{0.057} &  \textbf{0.56} &          0.085 &            0.62 \\
 $\left< \mathrm{A}^\intercal \mathrm{A} \mX , \mX \right>$ &              - &            - &               - &              - &  \textbf{0.15} &   \textbf{0.94} \\
                                           RayleighQuotient &           0.22 &           12 &   \textbf{0.07} &  \textbf{0.57} &            0.1 &            0.63 \\
                                                 completion &              - &            - &               - &              - &              - &               - \\
                                                ExpMachines &  \textbf{0.15} &         0.12 &   \textbf{0.15} &           0.14 &  \textbf{0.15} &  \textbf{0.067} \\
\bottomrule
\end{tabular}
   \caption{Comparison of computing Riemannian gradient by three methods on GPU for Large TT-ranks. \label{tbl:riemannian-autodiff-grad-gpu-large}}
\end{subtable}
\caption{Comparison of three methods for Large TT-rank setting (see Table~\ref{tbl:experiment-ranks}) for computing the Riemannian gradient of various functions in terms of execution time and memory used on CPU and GPU. A dash means that the respective method ran out of memory. \label{tbl:riemannian-autodiff-grad-large}}
\end{table}

\begin{table}[h!]
\centering
\begin{subtable}[h]{\textwidth}
\centering
\begin{tabular}{lcc cc cc}
\toprule
          Function &  \multicolumn{2}{c}{Naive} & \multicolumn{2}{c}{Improved} & \multicolumn{2}{c}{AD} \\
          \cmidrule(lr){2-3}\cmidrule(lr){4-5}\cmidrule(r){6-7}
           &  (s) & (Gb) &  (s) & (Gb) &  (s) & (Gb) \\
\midrule
                      $\left< \mathrm{A} \mX , \mX \right>$ &       0.71 &          1.7 &   \textbf{0.43} &         0.33 &           0.48 &   \textbf{0.17} \\
 $\left< \mathrm{A}^\intercal \mathrm{A} \mX , \mX \right>$ &       0.32 &         0.56 &               - &            - &  \textbf{0.14} &  \textbf{0.058} \\
                                           RayleighQuotient &        1.8 &          4.2 &             0.8 &         0.55 &  \textbf{0.52} &   \textbf{0.17} \\
                                                 completion &          - &            - &            0.47 &         0.91 &  \textbf{0.19} &    \textbf{0.9} \\
                                                ExpMachines &       0.22 &        0.056 &  \textbf{0.047} &        0.032 &          0.057 &   \textbf{0.03} \\
\bottomrule
\end{tabular}
   \caption{Comparison of computing the approximate Riemannian Hessian by vector product by three methods on CPU for Small TT-ranks. \label{tbl:riemannian-autodiff-hessian-by-vector-cpu-small}}
\end{subtable}
~\\[0.3cm]
\begin{subtable}[h]{\textwidth}
\centering
\begin{tabular}{lcc cc cc}
\toprule
          Function &  \multicolumn{2}{c}{Naive} & \multicolumn{2}{c}{Improved} & \multicolumn{2}{c}{AD} \\
          \cmidrule(lr){2-3}\cmidrule(lr){4-5}\cmidrule(r){6-7}
           &  (s) & (Gb) &  (s) & (Gb) &  (s) & (Gb) \\
\midrule
                      $\left< \mathrm{A} \mX , \mX \right>$ &           0.053 &          2.7 &  \textbf{0.035} &         0.21 &           0.066 &   \textbf{0.16} \\
 $\left< \mathrm{A}^\intercal \mathrm{A} \mX , \mX \right>$ &  \textbf{0.018} &         0.67 &               - &            - &           0.019 &  \textbf{0.058} \\
                                           RayleighQuotient &            0.16 &            7 &           0.093 &         0.58 &  \textbf{0.079} &    \textbf{0.2} \\
                                                 completion &               - &            - &    \textbf{0.1} &          1.8 &            0.15 &   \textbf{0.72} \\
                                                ExpMachines &           0.031 &          0.1 &  \textbf{0.011} &        0.013 &           0.014 &  \textbf{0.012} \\
\bottomrule
\end{tabular}
   \caption{Comparison of computing the approximate Riemannian Hessian by vector product by three methods on GPU for Small TT-ranks. \label{tbl:riemannian-autodiff-hessian-by-vector-gpu-small}}
\end{subtable}
\caption{Comparison of three methods for Small TT-rank setting (see Table~\ref{tbl:experiment-ranks}) for computing the approximate Riemannian Hessian by vector product of various functions in terms of execution time and memory used on CPU and GPU. A dash means that the respective method ran out of memory. \label{tbl:riemannian-autodiff-hessian-by-vector-small}}
\end{table}

\begin{table}[h!]
\centering
\begin{subtable}[h]{\textwidth}
\centering
\begin{tabular}{lcc cc cc}
\toprule
          Function &  \multicolumn{2}{c}{Naive} & \multicolumn{2}{c}{Improved} & \multicolumn{2}{c}{AD} \\
          \cmidrule(lr){2-3}\cmidrule(lr){4-5}\cmidrule(r){6-7}
           &  (s) & (Gb) &  (s) & (Gb) &  (s) & (Gb) \\
\midrule
                      $\left< \mathrm{A} \mX , \mX \right>$ &            6.9 &             28 &        3.6 &          3.2 &  \textbf{2.2} &  \textbf{1.1} \\
 $\left< \mathrm{A}^\intercal \mathrm{A} \mX , \mX \right>$ &              - &              - &          - &            - &  \textbf{4.2} &    \textbf{2} \\
                                           RayleighQuotient &             18 &             56 &        4.9 &            4 &  \textbf{2.4} &  \textbf{1.1} \\
                                                 completion &              - &              - &          - &            - &             - &             - \\
                                                ExpMachines &  \textbf{0.38} &  \textbf{0.11} &          1 &         0.14 &          0.52 &          0.23 \\
\bottomrule
\end{tabular}
   \caption{Comparison of computing the approximate Riemannian Hessian by vector product by three methods on CPU for Large TT-ranks. \label{tbl:riemannian-autodiff-hessian-by-vector-cpu-large}}
\end{subtable}
~\\[0.3cm]
\begin{subtable}[h]{\textwidth}
\centering
\begin{tabular}{lcc cc cc}
\toprule
          Function &  \multicolumn{2}{c}{Naive} & \multicolumn{2}{c}{Improved} & \multicolumn{2}{c}{AD} \\
          \cmidrule(lr){2-3}\cmidrule(lr){4-5}\cmidrule(r){6-7}
           &  (s) & (Gb) &  (s) & (Gb) &  (s) & (Gb) \\
\midrule
                      $\left< \mathrm{A} \mX , \mX \right>$ &          - &              - &  \textbf{0.067} &  \textbf{0.66} &           0.16 &           1 \\
 $\left< \mathrm{A}^\intercal \mathrm{A} \mX , \mX \right>$ &          - &              - &               - &              - &  \textbf{0.31} &  \textbf{2} \\
                                           RayleighQuotient &          - &              - &   \textbf{0.14} &  \textbf{0.73} &           0.19 &         1.1 \\
                                                 completion &          - &              - &               - &              - &              - &           - \\
                                                ExpMachines &       0.16 &  \textbf{0.12} &   \textbf{0.15} &           0.14 &  \textbf{0.15} &        0.17 \\
\bottomrule
\end{tabular}
   \caption{Comparison of computing the approximate Riemannian Hessian by vector product by three methods on GPU for Large TT-ranks. \label{tbl:riemannian-autodiff-hessian-by-vector-gpu-large}}
\end{subtable}
\caption{Comparison of three methods for Large TT-rank setting (see Table~\ref{tbl:experiment-ranks}) for computing the approximate Riemannian Hessian by vector product of various functions in terms of execution time and memory used on CPU and GPU. A dash means that the respective method ran out of memory. \label{tbl:riemannian-autodiff-hessian-by-vector-large}}
\end{table}

\end{document}